\documentclass[11pt]{amsart}

\usepackage[T1]{fontenc}
\usepackage{lmodern}
\usepackage{amsmath,amssymb,amsthm,mathtools,amscd}
\usepackage[hidelinks]{hyperref}
\usepackage{microtype}
\numberwithin{equation}{section}

\newtheorem{theorem}{Theorem}[section]
\newtheorem{proposition}[theorem]{Proposition}
\newtheorem{lemma}[theorem]{Lemma}
\newtheorem{corollary}[theorem]{Corollary}
\theoremstyle{definition}
\newtheorem{definition}[theorem]{Definition}
\theoremstyle{remark}
\newtheorem{remark}[theorem]{Remark}

\newcommand{\T}{\mathbb T}
\newcommand{\C}{\mathbb C}
\newcommand{\Z}{\mathbb Z}

\newcommand{\rank}{\operatorname{rank}}
\newcommand{\diag}{\operatorname{diag}}

\newcommand{\SP}{\operatorname{SP}}
\newcommand{\SPV}{\operatorname{SPV}}

\title{Complexity Rank of AT Algebras of Real Rank Zero}

\author{Qingnan An}
\address{School of Mathematics and Statistics, Northeast Normal University,\newline Changchun, {\rm130024}, China}
\email{anqn645@nenu.edu.cn}

\author{Kang Li}
\address{Centre for Mathematical Sciences, Lund University, Box 118, SE-221 00 Lund, Sweden}
\email{kang.li@math.lth.se}

\author{Zhichao Liu}
\address{School of Mathematical Sciences,
Dalian University of Technology,
Dalian, {\rm116024}, China}
\email{zhichaoliu@dlut.edu.cn}

\date{}
\subjclass[2020]{46L05, 46L35, 46L80}
\keywords{AT algebra, complexity rank, real rank zero, irrational rotation algebra}
\hypersetup{
  pdftitle={Complexity Rank of AT Algebras of Real Rank Zero},
  pdfsubject={A proof using cyclic models, spectral variation, and KK-shape equivalence},
  pdfkeywords={AT algebra, complexity rank, real rank zero, spectral variation}
}

\begin{document}

\begin{abstract}
In this paper, we prove that every separable unital AT algebra of real rank zero has complexity rank at most one.
In particular, every irrational rotation algebra has complexity rank one.  This answers a question of Jaime and Willett.
\end{abstract}

\maketitle

\section{Introduction}
The notion of complexity rank for $C^*$-algebras was introduced by Willett and Yu in \cite{WY} as part of their study of the UCT problem. This notion was motivated by finite decomposition complexity in coarse geometry \cite{GTY}, dynamical
complexity \cite{GWY}, and nuclear dimension \cite{WZ}. In the same paper \cite[Theorem~1.7]{WY}, Willett and Yu proved that the UCT problem is equivalent to the statement that every Kirchberg algebra with zero $K$-theory has complexity rank one.

The first nontrivial computation of complexity rank was carried out by Willett and Yu, who showed that the Cuntz algebras $\mathcal O_n$, for
$2\leq n<\infty$, have complexity rank one
\cite[Proposition~A.1]{WY}. Subsequently, Jaime and Willett proved that every
unital UCT Kirchberg algebra has complexity rank either one or two, and that it has complexity rank one precisely when its $K_1$-group is torsion-free \cite[Theorem~1.6]{JW}. These results provide a fairly broad picture in the purely infinite setting, whereas much less is known about complexity rank in the stably finite setting.

In \cite[Question~6.2]{JW}, Jaime and Willett asked about the complexity rank of irrational rotation algebras and conjectured that every non-AF separable AT algebra of real rank zero has complexity rank one. AT algebras provide a natural testing ground for the study of complexity rank. They are inductive limits of finite direct sums of matrix algebras over \(C(\mathbb T)\), possibly together with finite-dimensional summands, and form
one of the earliest successful classes in the Elliott classification program. Historically, Elliott classified AT algebras of real rank zero by their graded ordered scaled $K$-theory \cite{Ell}.
Bratteli and Elliott subsequently characterized real rank zero for inductive limits of finite direct sums of matrix algebras over finite-dimensional compact metrizable spaces by very small eigenvalue variation \cite{BE}. In the work of Elliott--Gong, Dadarlat, and Dadarlat--Gong on the classification of AH algebras of real rank zero, spectral and weak variation provided quantitative control of connecting maps and enabled approximate decompositions involving homomorphisms with finite-dimensional image \cite{EG,D,DG}.
These ideas provide important structural input for our arguments in this paper.

Recently, An and Liu proved that every unital $C^*$-algebra of complexity rank at most one has real rank zero \cite[Theorem~4.1]{AL}. In this paper, we prove the converse implication for AT algebras, thereby answering
a question of Jaime and Willett \cite[Question~6.2]{JW}:

\begin{theorem}\label{thm:main}
Let $A$ be a separable unital AT algebra of real rank zero.  Then
the complexity rank of $A$ is at most one.
\end{theorem}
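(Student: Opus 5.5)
We first recall the Willett--Yu framework. Complexity rank at most one means the following. For every finite set $F\subset A$ and every $\epsilon>0$ there are $C^*$-subalgebras $C,D\subseteq A$ (or, in the flexible formulation, in an ultrapower or sequence algebra) and a positive contraction $h$ with four properties. First, $h$ almost commutes with $F$. Second, $hF$ is $\epsilon$-contained in $C$ and $(1-h)F$ is $\epsilon$-contained in $D$. Third, $h(1-h)F$ is $\epsilon$-contained in $C\cap D$. Fourth, $C$, $D$ and $C\cap D$ all have complexity rank zero, that is, they are locally finite-dimensional in the sense of being approximately AF.

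The plan is to produce such decompositions using the structure of the AT building blocks. Write $A=\varinjlim (A_n,\phi_n)$ with $A_n$ a finite direct sum of matrix algebras over $C(\T)$ and points. Since $A$ has real rank zero, Bratteli--Elliott gives very small eigenvalue variation of the connecting maps. Following Elliott--Gong and Dadarlat--Gong, we will approximately factor $\phi_{n,m}$ for large $m$ through a cyclic model. That is, we write $\phi_{n,m}$ as approximately unitarily equivalent, on $F$, to a homomorphism of the form $x\mapsto u\,\diag(x(z_1),\dots,x(z_k),\psi(x))u^*$. Here the eigenvalues $z_i$ are $\delta$-dense on the circle, and $\psi$ is a small-multiplicity piece that itself has small spectral variation.

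We will upgrade this to a model in which the image is generated by a "rotation": the image of the canonical unitary $z$ is approximately a cyclic shift-type unitary $S$ in $M_N$ whose spectrum consists of $N$-th roots of unity. This is the step we call the cyclic model and spectral variation step.

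In the cyclic model we write down the complexity-rank-one decomposition explicitly. Split $\T$ into two families of arcs $U$ and $V$, each a finite disjoint union of arcs of length less than $\pi$, covering $\T$ with overlaps. Take a partition of unity $h,1-h$ subordinate to $U$ and $V$, and set $h_m=\phi_{n,m}(h)$. Functions supported on one arc of $U$ live, after passing through the finite-spectrum model, in the algebra generated by spectral projections of $S$ over that arc; this algebra is finite dimensional. So we take $C$ to be the $C^*$-algebra generated by $\phi_{n,m}(C_0(U))$ together with the relevant matrix units, which is AF, and similarly $D$ for $V$ and $C\cap D$ for $U\cap V$.

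Because on each arc the unitary has a continuous logarithm, $C_0(\text{arc})\otimes M_k$ embeds into an AF algebra once the spectrum is discretized. The real-rank-zero hypothesis enters precisely here: it lets us replace $\phi_{n,m}(C_0(U))$ by a finite-dimensional approximation within $\epsilon$. The almost commutation of $h_m$ with $F$ is automatic, because $h_m$ lies in the image of the commutative algebra $C(\T)\otimes 1$ and $F$ can be taken in $\phi_{n,\infty}(A_n)$. Matrix coefficients are handled by the fact that the cover is in the center of each summand.

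The main obstacle is that $C$ and $C\cap D$ must genuinely have complexity rank zero as subalgebras, not merely approximately. Since the definition allows passing to $C^*$-subalgebras of the sequence algebra, we will instead build the decomposition in $\prod A/\bigoplus A$ using the sequence of models. Here the KK-shape equivalence is expected to be needed: it identifies the hereditary algebras $\overline{h A h}$ over an arc with AF algebras up to the shape invariance of complexity rank zero. The finite-dimensional summands and the error term $\psi$ are absorbed into $C\cap D$, since they are finite dimensional after discretization. Theorem~\ref{thm:main} then follows, with the bound at most one.
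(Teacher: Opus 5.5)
Your proposal has a genuine gap: it never deals with $K_1$, and each of its key steps fails for that reason.

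\textbf{The finite-spectrum model is impossible.} If a circle-to-circle partial map has nonzero degree, the image $w$ of $\diag(\zeta,1,\ldots,1)$ has nonzero $K_1$-class. This must happen for $A_\theta$, since $K_1(A_\theta)\cong\Z^2$. Such a unitary is not within distance $2$ of any unitary with finite spectrum, even after unitary conjugation. It is also not within distance $1$ of any unital locally finite-dimensional subalgebra. Hence the image of $\zeta$ cannot be approximated by a ``cyclic shift-type unitary in $M_N$ whose spectrum consists of $N$-th roots of unity''. Likewise, the remainder $\psi$ cannot be made ``finite dimensional after discretization'' and absorbed into $C\cap D$. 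Taken literally, your model would put every element of $F$ near a finite-dimensional subalgebra, so it would prove that $A$ is AF.

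\textbf{A central cut $h$ cannot work.} The choice $h=\phi_{n,m}(h_0)$, with $h_0$ a partition of unity on arcs, fails for the same reason, because such an $h$ is central in the image. Suppose $h_0\equiv1$ on an arc of half-length $r$ about $z_0$, and put $\mu=-z_0$. Let $w$ be the image of $\diag(\zeta,\mu,\ldots,\mu)$, with $\mu$ on the other summands, taken in a block whose $\zeta$ has nonzero image in $K_1(A)$. Then $hw+\mu(1-h)$ lies within a few $\varepsilon$ of $C$, since $hF$ is $\varepsilon$-contained in $C$ and $1-h$ lies in the unit ball of the unital algebra $E$. On the other hand, $hw+\mu(1-h)$ is the image of an invertible of winding number one whose inverse has norm at most $1/\sin(r/2)$. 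For $\varepsilon$ small compared with $\sin(r/2)$, $C$ would therefore contain a unitary of class $[w]\neq0$, which is impossible in a unital locally finite-dimensional subalgebra.

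\textbf{Further problems.}
\begin{itemize}
\item $\phi_{n,m}(C_0(U))$ together with matrix units does not generate an AF algebra. For a nonzero-degree partial map the spectrum is all of $\T$, so you get a copy of $C_0(U)\otimes M_s$.
\item Approximating the $U$- and $V$-pieces separately gives no common $E\subseteq C\cap D$ containing $h$. Producing that exact overlap is the whole difficulty.
\item Working in $\prod A/\bigoplus A$ does not help, because the definition requires $C,D,E\subseteq A$.
\item $KK$-shape equivalence says nothing about hereditary subalgebras being AF.
\end{itemize}

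\textbf{What the paper does instead.} The missing idea is the paper's mechanism, which has four parts.
\begin{itemize}
\item Lemma~\ref{lem:spectral-cover} shows that small spectral variation forces every nonzero-degree partial map to have large multiplicity $N$.
\item Each such partial map is \emph{replaced}, not approximated, by the cyclic map $\zeta\mapsto S_{N,d}$ (Lemma~\ref{lem:full-model}). This map has the same $KK$-class and carries its $K_1$ in the single corner entry $z^d$.
\item The cut $h$ is not a function of the image of $\zeta$. It is a slowly varying diagonal cutoff $\diag(\lambda_1,\ldots,\lambda_N)$ in the index direction of $M_N$. It vanishes near the twisted boundary between $N$ and $1$ and equals $1$ near the boundary between $m$ and $m+1$, to which a diagonal gauge unitary $g$ moves the twist. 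This gives finite-dimensional algebras $C=M_s\otimes M_N\otimes\C1_R$ and $D=gCg^*$, with the exact overlap $E=M_s\otimes(M_m\oplus M_{N-m})\otimes\C1_R\subseteq C\cap D$. The commutator is of order $1/N$ on Laurent polynomials of fixed degree, uniformly in $d$ and after composition with any outer map (Lemma~\ref{lem:two-cut}). An exactly central $h$, as in your proposal, is excluded by the obstruction above; the non-commutativity is essential.
\item The model maps are only $KK$-equivalent to the original connecting maps. So one must build a new inductive system from them and check that it still has real rank zero (Lemma~\ref{lem:spectral-composition} with Theorem~\ref{thm:small-variation}). One then identifies its limit with $A$ by $KK$-shape equivalence and Elliott's classification (Proposition~\ref{prop:KK-shape}).
\end{itemize}
None of these steps appears in your proposal.
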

By definition, a separable unital $C^*$-algebra has complexity rank zero if and only if it is AF. Thus, every non-AF algebra covered by Theorem~\ref{thm:main} has complexity rank one. The most relevant example is the irrational rotation algebra $A_\theta$ for an \emph{irrational} number $\theta$. More precisely, $A_\theta$ is the universal unital (separable) $C^*$-algebra generated by
unitaries $u$ and $v$ satisfying
$
 vu=e^{2\pi i\theta}uv.
$
In \cite{EE}, Elliott and Evans proved that every irrational rotation
algebra $A_\theta$ is an AT algebra of real rank zero. Moreover, $K_1(A_\theta)\cong\mathbb Z^2$ by \cite{PV}. Therefore, it cannot be AF. As a consequence of Theorem~\ref{thm:main}, we obtain the following corollary, which answers \cite[Question 6.2]{JW}:
\begin{corollary}
Every irrational rotation algebra $A_\theta$ has complexity rank one.
\end{corollary}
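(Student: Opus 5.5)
Recall the Willett--Yu definition: $A$ has complexity rank at most one if, for every finite set $F\subset A$ and every $\epsilon>0$, there are a positive contraction $h\in A$ and $C^*$-subalgebras $C,D$ such that $h$ almost commutes with $F$, $hF$ is $\epsilon$-contained in $C$, $(1-h)F$ is $\epsilon$-contained in $D$, $h(1-h)F$ is $\epsilon$-contained in $C\cap D$, and $C$, $D$ (and the relevant intersection) have complexity rank zero, i.e.\ are AF. Uniformity conditions on the witnesses are part of that definition, and I will track them. The plan is to verify this locally inside a single building block. Write $A=\varinjlim(A_n,\phi_n)$ with $A_n=\bigoplus_i M_{k_i}(C(\T))\oplus(\text{finite-dimensional})$. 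After perturbing, $F$ lies in the image of some $A_n$.

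The first step uses real rank zero, via Bratteli--Elliott's very small eigenvalue variation. For $m\gg n$, the connecting map $\phi_{n,m}$ has tiny spectral variation on the images of the canonical unitaries $z\otimes 1$. Following Elliott--Gong and Dadarlat--Gong, I then decompose $\phi_{n,m}$ up to $\epsilon$ on $F$, after unitary conjugation, as a direct sum $\psi\oplus\chi$. Here $\chi$ factors through a finite-dimensional algebra, i.e.\ is a point-evaluation map $\sum p_j f(x_j)$. The summand $\psi$ has small multiplicity relative to $\chi$, or can be absorbed into a homotopy-trivial map. Alternatively, I would use a KK-shape equivalence: replace $A$ by an inductive limit of cyclic models $C(\T)\otimes M_k$, whose connecting maps are given by unitaries with spectra on finite grids, so the spectral variation is $\le 2\pi/N$.

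The second step is to build the cover on a single model block $M_k(C(\T))$ mapping into $A_m$. Cover $\T$ by two arcs $U_1,U_2$, each a union of small disjoint arcs of length about $1/N$, for example alternate intervals of an $N$-grid thickened. Take a partition of unity $h,1-h$ subordinate to this cover, and let $h$ be the image of the function $h(z)$. It commutes exactly with the image of $C(\T)$, hence approximately with $F$. On a union of disjoint arcs, elements of the image of $M_k(C_0(U_j))$ are approximated within the spectral variation by locally constant functions. So $hF$ sits near the image of $\bigoplus M_k(C(\overline{I}))$, which is AF up to $\epsilon$. More precisely, I would take $C$ to be the hereditary subalgebra generated by the image of $C_0(U_1)\otimes M_k$. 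Since $U_1$ is a union of intervals, this is a subhomogeneous algebra over a one-dimensional space with trivial $K_1$. In the limit algebra of real rank zero, such a hereditary subalgebra of a real rank zero AT algebra with $K_1=0$ locally should be AF, using Elliott's classification or Dadarlat's results on $K_1$-vanishing. The same argument applies to $D$ and to $C\cap D$, which is supported over $U_1\cap U_2$, again a union of intervals.

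The main obstacle is making $C$, $D$ and $C\cap D$ genuinely AF subalgebras of $A$, not just of $A_m$, while keeping the $\epsilon$-containments. The issue is that the unitary generator's image in later stages must not recombine the arcs. My first attempt is to choose $C$ as the inductive limit, inside $A$, of hereditary subalgebras over preimages of $U_1$ under the eigenvalue maps of later connecting maps. I would use small spectral variation and pass to a subsequence so that the supports nest. Then each stage has $K_1=0$ with real rank zero, and AF-ness follows from Elliott's classification. The finite-dimensional summands of $A_n$ are handled trivially: put them entirely in $C$ and make $h$ equal to $1$ there.
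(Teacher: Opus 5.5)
There is a genuine gap, and it sits exactly where you locate the ``main obstacle''. Your AF witnesses are hereditary subalgebras generated by images of $M_k(C_0(U_j))$, or nested limits of such. In $A_\theta$ this can never work.

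\begin{itemize}
\item $A_\theta$ is simple, so every nonzero hereditary subalgebra is full. By Brown's theorem it is stably isomorphic to $A_\theta$, so it has $K_1\cong\Z^2\neq0$, whereas AF algebras have $K_1=0$.
\item A nested limit of hereditary subalgebras $C_r\subseteq A_r$, with $\phi_{r,r+1}(C_r)\subseteq C_{r+1}$, is again hereditary in $A$. By continuity of $K_1$, your claim that ``each stage has $K_1=0$'' must therefore fail.
\item Already at a single stage, $C_0$ of an open arc has $K_1\cong\Z$. So ``a union of intervals, hence trivial $K_1$'' is wrong.
\item Adjoining the unit, which the definition requires, does not change $K_1$, so it does not help.
\end{itemize}

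The underlying problem is that you cut in the spectral variable. Your $h=\phi(h(\zeta))$ commutes exactly with the image, but then $h\phi(f)=\phi(hf)$ lies in a copy of $M_k(C_0(U_1))$. That algebra has no nonzero projections over any arc, and nothing places it near a locally finite-dimensional subalgebra. ``Locally constant approximation'' does not produce one: a constant matrix times a nonzero cutoff $g$ still generates $C^*(g)\otimes M_k$, which is infinite-dimensional.

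The Elliott--Gong/Dadarlat--Gong splitting $\psi\oplus\chi$ does not remove the difficulty either. Maps factoring through finite-dimensional algebras vanish on $K_1$, and $K_1(A_\theta)\neq0$ forces nonzero-degree circle-to-circle partial maps. Those still have to be cut somehow.

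The missing idea is to cut in the cyclic index rather than the spectrum.

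\begin{itemize}
\item Nonzero degree together with small spectral variation forces large multiplicity $N$ (Lemma~\ref{lem:spectral-cover}).
\item Replace each partial map by the $KK$-equivalent model $\alpha\circ\Psi_{N,d}^{(s)}$. In this model the generator goes to a band matrix $T$ whose only $z$-dependent entry, $V^d$, sits at the wrap-around.
\item Take $h$ diagonal in the $M_N$-index, vanishing near the wrap-around and equal to $1$ near a middle cut. Conjugation by $g_N$ moves $V^d$ to that middle cut.
\item This gives exact containments of $hx$, $(1-h)x$ and $h(1-h)x$ in the finite-dimensional unital algebras $M_s\otimes M_N\otimes\C 1_R$, $gCg^*$ and $M_s\otimes(M_m\oplus M_{N-m})\otimes\C 1_R$. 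The price is a commutator of size $O(L/N)$ (Lemma~\ref{lem:two-cut}).
\end{itemize}

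These witnesses are finite-dimensional already at a finite stage, so their images in the limit are finite-dimensional and your ``main obstacle'' never arises. The model system is then identified with $A$ by $KK$-shape equivalence plus Elliott's classification, after checking that it still has real rank zero.

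Finally, your proposal only targets the upper bound. To get complexity rank exactly one you must also show that $A_\theta$ is not AF, for instance because $K_1(A_\theta)\cong\Z^2$ by Pimsner--Voiculescu. You should also cite Elliott--Evans for the AT, real-rank-zero structure you assume.
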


The proof of Theorem~\ref{thm:main} combines cyclic model maps with a two-cut
construction. Small spectral variation forces large multiplicity for
circle-to-circle homomorphisms of nonzero $K_1$-degree
(Lemma~\ref{lem:spectral-cover}). We replace the circle-to-circle partial maps
by $KK$-equivalent cyclic models whose spectral variation is controlled by
multiplicity, independently of degree (Lemma~\ref{lem:full-model}). For these
models, two separated cuts in the cyclic shift yield an approximately central
positive contraction and three finite-dimensional subalgebras with an exact
common overlap; the estimates are uniform in degree and remain valid after
composition with outer homomorphisms (Lemma~\ref{lem:two-cut}).
Proposition~\ref{prop:local-model} assembles these decompositions. After passing
to a subsequence and replacing the connecting maps, we arrange that composites
from each fixed stage have spectral variation tending to zero, ensuring real
rank zero for the new limit (Theorem~\ref{thm:small-variation}). The resulting
system is $KK$-shape equivalent to the original one, so
Proposition~\ref{prop:KK-shape} identifies their limits. The finite-dimensional
criterion of Proposition~\ref{prop:criterion} then gives complexity rank at
most one.

\section{Preliminaries}

We fix notation and recall the finite-dimensional criterion for complexity
rank at most one. We then review spectral variation and the classification
of AT algebras of real rank zero.

All homomorphisms in this paper are $*$-homomorphisms, and all tensor
products are spatial. We write $M_s=M_s(\C)$ and use $e_{ab}$ for the
standard matrix units. The unit of a corner $pBp$ is $p$: thus a
homomorphism into this corner is unital when it sends the source unit
to $p$. Whenever we work in a corner, expressions such as $1-h$ are
understood as $p-h$. For an inductive system, $\phi_{n,m}$ denotes the
map from stage $n$ to stage $m$, with
$\phi_{n,r}=\phi_{m,r}\phi_{n,m}$ for $n<m<r$.

\begin{definition}[\cite{BP}]
A unital $C^*$-algebra has \emph{real rank zero} if its
self-adjoint elements with finite spectrum are norm dense in its self-adjoint part.
\end{definition}

\begin{definition}
Let $A$ be a $C^*$-algebra. Denote by $A_{sa}$ its self-adjoint part.
For $a,b\in A$ and $\varepsilon>0$, write $a\approx_\varepsilon b$
if $\|a-b\|<\varepsilon$. For a subset $S\subseteq A$, write
$a\in_\varepsilon S$ if $a\approx_\varepsilon s$ for some $s\in S$.
We use $[a,b]$ to denote $ab-ba$.
\end{definition}

\begin{definition}
Let $\mathcal{C}$ be a class of $C^*$-algebras. A $C^*$-algebra $A$ is locally in $\mathcal{C}$ if, for any finite subset $X$ of $A$ and any $\varepsilon>0$, there is a $C^*$-subalgebra $C$ of $A$ that belongs to $\mathcal{C}$ and satisfies $x \in_\varepsilon C$ for all $x \in X$.
\end{definition}

\begin{definition}[{\cite[Definition~1.1]{WY}, \cite{JW}}]\label{def:complexity}
Let $A$ be a unital $C^*$-algebra, and let $\mathcal{C}$ be a class of unital $C^*$-algebras. Then $A$ decomposes over $\mathcal{C}$ if, for every finite subset $X$ of the unit ball of $A$ and every $\varepsilon>0$, there exist $C^*$-subalgebras $C, D$, and $E$ of $A$ that are in the class $\mathcal{C}$ and contain $1_A$, and a positive contraction $h \in E$ such that

(i) $\|[h, x]\|<\varepsilon$ for all $x \in X$;

(ii) $h x \in_\varepsilon C,\left(1_A-h\right) x \in_\varepsilon D$, and $h\left(1_A-h\right) x \in_\varepsilon E$ for all $x \in X$;

(iii) $e\in_\varepsilon C$ and $e\in_\varepsilon D$ for every $e$ in the unit ball of $E$.
\end{definition}

\begin{definition}[{\cite[Definition~2.3]{JW}}]
Let $\alpha$ be an ordinal number.

(i) If $\alpha=0$, let $\mathcal{D}_0$ be the class of unital $C^*$-algebras that are locally finite-dimensional.

(ii) If $\alpha>0$, let $\mathcal{D}_\alpha$ be the class of unital $C^*$-algebras that decompose over $C^*$-algebras in $\bigcup_{\beta<\alpha} \mathcal{D}_\beta$.

A unital $C^*$-algebra has \emph{finite complexity} if it is in $\mathcal{D}_\alpha$ for some $\alpha$, in which case its \emph{complexity rank} is the smallest possible $\alpha$. We say a unital $C^*$-algebra has complexity rank zero if and only if it is locally finite-dimensional.

\end{definition}
\begin{remark}
  The class of $C^*$-algebras $\mathcal{D}_\alpha$ is closed under taking finite direct sums, quotients and inductive limits with unital connecting maps. Any unital $C^*$-algebra that is locally in $\mathcal{D}_\alpha$
is in $\mathcal{D}_\alpha$; see  \cite[Lemmas 2.5--2.7]{JW}.
\end{remark}

\begin{proposition}[{\cite[Proposition 2.16]{JW}}]
\label{prop:criterion}
A unital $C^*$-algebra $A$ has complexity rank at most one if and only if the
following condition holds.  For every finite subset $X$ of the unit ball of
$A$ and every $\varepsilon>0$, there are finite-dimensional unital
$C^*$-subalgebras $C,D,E\subseteq A$ and a positive contraction $h\in E$
such that $E\subseteq C\cap D$ and, for every $x\in X$,
$$
 \|[h,x]\|<\varepsilon,
 \qquad
 hx\in_\varepsilon C,
 \qquad
 (1-h)x\in_\varepsilon D,
 \qquad
 h(1-h)x\in_\varepsilon E.
$$
\end{proposition}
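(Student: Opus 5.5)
Both directions reduce to comparing the decomposition in Definition~\ref{def:complexity} over $\mathcal D_0$ with the stated finite-dimensional condition. Since complexity rank at most one means $A\in\mathcal D_1$ (or $A\in\mathcal D_0\subseteq\mathcal D_1$), it suffices to show that $A\in\mathcal D_1$ if and only if the condition holds.

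\emph{The condition implies $A\in\mathcal D_1$.} Finite-dimensional unital subalgebras lie in $\mathcal D_0$, so conditions (i) and (ii) of Definition~\ref{def:complexity} are immediate. Condition (iii) holds exactly, since $E\subseteq C\cap D$ gives $e\in C$ and $e\in D$ for every $e$ in the unit ball of $E$.

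\emph{$A\in\mathcal D_1$ implies the condition.} Fix $X$ and $\varepsilon$, and apply the decomposition with a much smaller tolerance $\delta$. This gives locally finite-dimensional unital $C,D,E$ and a positive contraction $h\in E$.

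\begin{enumerate}
\item Replace $E$ by a finite-dimensional unital subalgebra $E_0\subseteq E$ that nearly contains $h$ and the elements $h(1-h)x$ up to $\delta$. Perturb $h$ by functional calculus to a positive contraction $h_0\in E_0$. This is possible since $E$ is locally finite-dimensional and a self-adjoint element close to $E_0$ is close to a self-adjoint contraction in $E_0$.
\item Since $E_0$ is finite-dimensional and its unit ball is $\delta$-contained in $C$, choose a finite-dimensional unital $C_0\subseteq C$ that $\delta$-contains a set of matrix units of $E_0$ together with the elements $hx$.
\item Use the stability of finite-dimensional algebras (near inclusions of finite-dimensional $C^*$-algebras are close to genuine inclusions, Christensen-type perturbation) to find a unitary $u\in A$ close to $1$ with $uE_0u^*\subseteq C_0$.
\item Inclusion into $D$ is harder, since we cannot conjugate $E_0$ twice independently. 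Instead, move $C_0$ to $u^*C_0u$, which contains $E_0$. Similarly obtain $v$ close to $1$ with $E_0\subseteq v^*D_0v$ for a finite-dimensional $D_0\subseteq D$. Then take $C'=u^*C_0u$ and $D'=v^*D_0v$, both finite-dimensional and unital, each containing $E_0$ exactly.
\item Set $E'=E_0$ and $h'=h_0$. All estimates change by $O(\delta)$, because $u,v\approx1$, and choosing $\delta$ small relative to $\varepsilon$ finishes.
\end{enumerate}

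The main obstacle is step~3: the perturbation estimate must be uniform. The Christensen-type result gives a constant for near inclusion of a finite-dimensional algebra that is independent of dimension when applied with a unit-ball tolerance: if $\delta$ is small, $B\subseteq_\delta C$ with $B$ finite-dimensional and $C$ AF yields a unitary within $O(\sqrt\delta)$. We apply it with $C_0$ enlarged inside the locally finite-dimensional algebra $C$ if necessary.
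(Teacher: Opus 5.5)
Your proof is correct. The paper does not prove this statement itself: it quotes it from Jaime--Willett and only remarks that $E\subseteq C\cap D$ makes condition~(iii) of Definition~\ref{def:complexity} hold exactly, which is your easy direction. For the converse you give a self-contained perturbation argument:
\begin{itemize}
\item pass to a finite-dimensional unital $E_0\subseteq E$ and a positive contraction $h_0\in E_0$ near $h$;
\item use Christensen's near-inclusion theorem to get unitaries $u,v$ close to $1$ with $uE_0u^*\subseteq C_0$ and $vE_0v^*\subseteq D_0$;
\item replace $C_0,D_0$ by $u^*C_0u$ and $v^*D_0v$, so that both contain $E_0$ exactly.
\end{itemize}
Conjugating the outer algebras rather than $E_0$ is the right device. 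The remaining estimates are $O(\delta)$ (or $O(\sqrt{\delta})$), as you claim.

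One point in Step~2 should be stated the way Step~3 actually uses it. It is not enough for $C_0$ to approximately contain a set of matrix units of $E_0$. The size of $E_0$ is only known after $\delta$ has been fixed. Approximating matrix units one at a time does not control the whole unit ball of $E_0$ uniformly in its dimension, so it does not feed into a dimension-free near-inclusion theorem.

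Instead, argue as follows. By (iii), the whole unit ball of $E_0$ lies within $\delta$ of $C$. Take a finite $\delta$-net of this compact set, and choose the finite-dimensional $C_0\subseteq C$ to approximate the corresponding elements of $C$ within $\delta$. Then every element of the unit ball of $E_0$ is within $3\delta$ of $C_0$, and Christensen's theorem applies with a constant independent of dimension. No AF hypothesis on $C$ is needed. Your last paragraph suggests you are aware of this, so it is mainly a matter of making Step~2 say this explicitly.

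Also include $1_A$ in each finite set you approximate. A finite-dimensional subalgebra within distance less than $1$ of $1_A$ automatically contains $1_A$. So $E_0$, $C_0$, $D_0$, and hence $u^*C_0u$ and $v^*D_0v$, are unital, as the criterion requires.
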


In this criterion, the three algebras have the same unit as $A$.
The inclusion $E\subseteq C\cap D$ makes condition~(iii) of
Definition~\ref{def:complexity} exact. The term $h(1-h)x$ records the
part where both pieces are present, and must be approximated inside
this common algebra. This is why the construction below keeps track
of $E$, in addition to approximating $hx$ and $(1-h)x$ separately.

\begin{definition}\label{def:AT-RR0}
A \emph{circle block} is an algebra $M_s(C(\T))$.  A \emph{circle algebra}
is a finite direct sum of circle blocks and finite-dimensional full matrix
algebras.  A separable unital $C^*$-algebra $A$ is an \emph{AT algebra} if $A$ can be written as an inductive limit of circle algebras
$$
 A=\varinjlim(A_n,\phi_{n,m}),
$$
where every connecting homomorphism $\phi_{n,m}$ is
unital.

\end{definition}

\begin{definition}\label{def:partial-map}
Let
$$
 B=\bigoplus_{i=1}^{l}B_i,
 \qquad
 C=\bigoplus_{j=1}^{q}C_j,
$$
where $B_i$ and $C_j$ are unital $C^*$-algebras, and let $\phi:B\to C$ be a homomorphism.  If $\iota_i:B_i\to B$ is the
canonical inclusion and $\pi_j:C\to C_j$ is the canonical projection, then
the $(i,j)$-\emph{partial homomorphism} of $\phi$ is
$$
 \phi^{i,j}=\pi_j\circ\phi\circ\iota_i:B_i\longrightarrow C_j.
$$
Set $p_{ij}=\phi^{i,j}(1_{B_i})$.  Thus
$\phi^{i,j}$ is unital as a map into $p_{ij}C_jp_{ij}$.  For each fixed
$j$, the projections $p_{1j},\ldots,p_{lj}$ are mutually orthogonal; if
$\phi$ is unital, then their sum is $1_{C_j}$.
\end{definition}

Spectral variation was introduced for homogeneous building blocks by Elliott and Gong \cite[Section~1.4]{EG}; the formulation below follows their
matrix-corner convention, as recorded explicitly in
\cite[Sections~1.2.2--1.2.7 and 1.2.16]{Gon}.

\begin{definition}\label{def:spectrum-spv}
Equip $\T$ with the chordal metric $d_{\T}(z,w)=|z-w|$. Let $P\in M_{k_1}(C(\T))$ be a projection of
constant rank $k\geq1$, and let
$$
 \psi:C(\T)\longrightarrow PM_{k_1}(C(\T))P
$$
be a unital homomorphism. Put $P_z=P(z)$. For each $z\in\T$,
there are points $x_1(z),\ldots,x_k(z)\in\T$ and a unitary
$U_z\in M_{k_1}(\C)$ such that
$$
\psi(f)(z)=P_zU_z
\left(\begin{array}{cccc}
                   f(x_1(z)) &  &  &   \\
                    & \ddots &  &    \\
                    &  &  f(x_k(z))&  \\
                    &  &  & \begin{array}{ccc}
                              0 &  &  \\
                               & \ddots &  \\
                               &  & 0
                            \end{array}
                    \\
                 \end{array}
                 \right)
U_z^*P_z.
$$

Write
$$
\operatorname{SP}\psi_z
 =\{x_1(z),\ldots,x_k(z)\}\,\,\text{(counting multiplicity)}.$$
  We call $\operatorname{SP}\psi_z$ the \emph{spectrum} of $\psi$
at the point $z$.
The \emph{spectrum of the homomorphism} is
$$
 \operatorname{SP}(\psi)
 =\bigcup_{z\in\T}\operatorname{SP}\psi_z\subseteq\T,
$$
where multiplicities are omitted in the union.

Following \cite[Definition~1.4.2]{EG}, the
\emph{spectral variation} of $\psi$ is
$$
 \operatorname{SPV}(\psi)
 =
 \sup_{z,z'\in \T}
 \min_{\sigma\in S_k}
 \max_{1\leq i\leq k}
 d_{\T}\bigl(x_i(z),x_{\sigma(i)}(z')\bigr),
$$
where $S_k$ denotes the permutation group on $\{1,\ldots,k\}$, and
$\{x_i(z)\}_{i=1}^k$ and $\{x_i(z')\}_{i=1}^k$ are the spectra of
$\psi$ at $z$ and $z'$, respectively.

We write $d_k$ for the matching metric on unordered multisets
of $k$ points in $\T$, counted with multiplicity:
$$
 d_k\bigl(\{x_1,\ldots,x_k\},\{y_1,\ldots,y_k\}\bigr)
 :=
 \min_{\sigma\in S_k}\max_{1\leq i\leq k}
 |x_i-y_{\sigma(i)}|.
$$
This distance is independent of the enumerations of the multisets. The map
$z\mapsto\SP\psi_z$ is continuous for this metric.
The points in a fiber are unordered, and the minimizing permutation
may depend on the pair of fibers. In particular, the definition does
not require a continuous labeling of the individual eigenvalues
around the circle. This distinction is useful for the cyclic models
in Section~3, whose eigenvalues can be permuted after one circuit.

For a unital homomorphism
$$
 \psi:M_l(C(\T))\longrightarrow PM_{k_1}(C(\T))P,
$$
let
$$
 \psi_1:C(\T)\longrightarrow
 \psi(e_{11}\otimes1_{\T})M_{k_1}(C(\T))\psi(e_{11}\otimes1_{\T}),
 \qquad
 \psi_1(f)=\psi(e_{11}\otimes f).
$$
As in \cite[Section~1.4.3]{EG}, define
$$
 \operatorname{SP}\psi_z
 :=\operatorname{SP}(\psi_1)_z,
 \qquad
 \operatorname{SP}(\psi):=\operatorname{SP}(\psi_1),
 \qquad
 \operatorname{SPV}(\psi)
 :=\operatorname{SPV}(\psi_1).
$$
Thus $\SP\psi_z$ contains $\rank(\psi_1(1))$ points, counted with
multiplicity.

We call $\#(\operatorname{SP}\psi_z)$ the \emph{multiplicity} of $\psi$. Then $$
\#(\operatorname{SP}\psi_z)=
\#(\operatorname{SP}(\psi_1)_z)=\rank(\psi_1(1)).$$

If $B$ and $C$ are circle algebras and $\phi:B\to C$ is a homomorphism,
then $\operatorname{SPV}(\phi)$ is the maximum of
$\operatorname{SPV}(\phi^{i,j})$ over all nonzero circle-to-circle partial
homomorphisms.  A partial homomorphism involving a finite-dimensional block
is assigned spectral variation zero; if there is no nonzero circle-to-circle partial
homomorphism, the maximum is defined to be zero.

\end{definition}

\begin{theorem}[{\cite[Theorem~1.4.5 and Corollary~2.25]{EG}}]
\label{thm:small-variation}
Let $A$ be a unital AT algebra with presentation
$$
 A=\varinjlim(A_n,\phi_{n,m}).
$$
Then $A$ has real rank zero if and only if, for every $n$ and
$\delta>0$, there is $m>n$ such that
$$
 \SPV(\phi_{n,m})<\delta.
$$
In this case, $\SPV(\phi_{n,r})\leq\SPV(\phi_{n,m}) <\delta$ for every $r\geq m$.
\end{theorem}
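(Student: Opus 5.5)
The "if and only if" is essentially \cite[Theorem~1.4.5 and Corollary~2.25]{EG}, specialized to circle algebras. So the plan is to reduce to that criterion and to prove only the monotonicity claim directly. For the equivalence, I would first check that Definition~\ref{def:spectrum-spv} agrees with the Elliott--Gong definition. For a partial map from a circle block to a circle block, $\SPV$ is computed from the corner map $\psi_1$ restricted to $e_{11}$, exactly as in \cite[1.4.3]{EG}. Partial maps involving finite-dimensional blocks are assigned variation zero; this is harmless, because finite-dimensional summands contribute no self-adjoint elements without finite spectrum.

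The direction ``small variation implies real rank zero'' then goes through the standard approximation. Given a self-adjoint $a\in A_n$ and $\delta$ small relative to the modulus of continuity of $a$, choose $m$ with $\SPV(\phi_{n,m})<\delta$. In each fiber of each circle-to-circle partial map, the eigenvalue multisets vary by at most $\delta$. This lets one perturb $\phi_{n,m}(a)$ within $\omega_a(\delta)$ to an element with finite spectrum, by pinching clusters of eigenvalues via functional calculus once gaps are created. Since $\bigcup_n\phi_{n,\infty}(A_n)$ is dense, real rank zero follows.

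The converse, that real rank zero forces small variation, is the harder direction, and I would cite it verbatim rather than reprove it. Its main obstacle is showing that, if variation stays bounded below, a self-adjoint element such as the image of $\mathrm{Re}(z)$ cannot be approximated by elements with finite spectrum. One uses the continuity of $z\mapsto\SP\psi_z$ and connectedness of $\T$ to produce an element whose spectrum is forced to fill an interval at every later stage.

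The monotonicity statement I would prove directly. Fix $r\ge m$ and write $\phi_{n,r}=\phi_{m,r}\circ\phi_{n,m}$. Take a nonzero circle-to-circle partial map of $\phi_{n,r}$ from block $i$ of $A_n$ to block $k$ of $A_r$. It decomposes, up to unitary conjugacy at each point $z$, as a direct sum over intermediate blocks $j$ of $A_m$ of $\phi_{m,r}^{j,k}\circ\phi_{n,m}^{i,j}$. Finite-dimensional intermediate blocks contribute constant spectrum, since a homomorphism $M_s\to C(\T)\otimes M$ has locally constant, hence constant, spectrum data up to unitaries.

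At a point $z$, the spectrum of a circle intermediate summand is $\bigcup_{y\in\SP(\phi_{m,r}^{j,k})_z}\SP(\phi_{n,m}^{i,j})_y$, with multiplicity. Comparing fibers at $z$ and $z'$, each point $y$ is matched to some $y'$, and $\SP(\phi^{i,j}_{n,m})_y$ lies within $d_k$-distance $\SPV(\phi_{n,m}^{i,j})$ of $\SP(\phi^{i,j}_{n,m})_{y'}$. Concatenating the matching permutations gives
\[
d\bigl(\SP(\phi_{n,r}^{i,k})_z,\SP(\phi_{n,r}^{i,k})_{z'}\bigr)\le\max_j\SPV(\phi_{n,m}^{i,j})\le\SPV(\phi_{n,m}).
\]
The finite-dimensional summands match themselves exactly. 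Taking the supremum over $z,z'$ and the maximum over $(i,k)$ yields $\SPV(\phi_{n,r})\le\SPV(\phi_{n,m})<\delta$.
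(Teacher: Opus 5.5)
Your proposal is correct. For the equivalence you do exactly what the paper does: the paper gives no argument of its own and cites Elliott--Gong \cite{EG} for the whole statement, including the final monotonicity clause.

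Where you differ is that you prove that clause directly, and your fiberwise composition argument is sound. It in fact gives the general inequality $\SPV(\eta\circ\phi)\le\SPV(\phi)$ for unital homomorphisms between circle algebras. The key point, which you use correctly, is that $\SPV$ is a supremum over \emph{all} pairs of fibers. So the spectral points of $\phi_{m,r}^{j,k}$ at $z$ and $z'$ can be paired arbitrarily; only their common number matters, and it is constant by connectedness of $\T$. Finite-dimensional intermediate blocks contribute a multiset independent of $z$.

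One caution: your sketch of the ``if'' direction would not stand on its own. For $d\neq0$ the element $\Psi_{N,d}^{(s)}\bigl(1_s\otimes\tfrac12(\zeta+\bar\zeta)\bigr)$ has total spectrum $[-1,1]$, although its eigenvalue variation is $O(1/N)$. So no functional calculus can pinch it to finite spectrum. A genuine perturbation is needed first, for instance to fiberwise simple spectrum, so that the ordered eigenprojections are continuous. Since you defer to \cite{EG} there, as the paper does, this does not affect the correctness of the proposal.
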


We also recall the invariant used in the classification theorem.  Its
graded order is needed when the algebra is not simple.

\begin{definition}[{\cite[Section~1.2]{EG}}]\label{def:graded-invariant}
For a unital $C^*$-algebra $B$, put $K_*(B)=K_0(B)\oplus K_1(B)$ and
$$
 K_*(B)^+
 =\left\{\bigl([p],[u+(1_r-p)]\bigr):
 \begin{array}{l}
 p\in M_r(B)\text{ is a projection},\ r\geq1,\\
 u\in U(pM_r(B)p)
 \end{array}\right\}.
$$
The \emph{scaled ordered $K_*$-group} of $B$ is
$$
 \bigl(K_*(B),K_*(B)^+,([1_B],0)\bigr),
$$
with its $\mathbb Z/2$-grading.  For a circle block, in the
standard $K$-theory coordinates,
$$
 K_*(M_s(C(\T)))^+
 =\{(a,b)\in\Z^2:a>0\}\cup\{(0,0)\},
 \qquad ([1],0)=(s,0).
$$
For $M_s$, the invariant is $(\Z,\Z_{\geq0},s)$, concentrated in
degree zero.  Finite direct sums have the coordinatewise graded order.
\end{definition}

\begin{theorem}[Elliott {\cite{Ell}; see also \cite[Theorem~4.6]{EG}}]
\label{thm:AT-classification}
Two separable unital AT algebras of real rank zero are isomorphic if and
only if their scaled ordered $K_*$-groups are isomorphic, with the grading
and order unit preserved.
\end{theorem}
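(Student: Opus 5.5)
The \emph{only if} direction is formal: an isomorphism $A\cong B$ induces an isomorphism of $K_0\oplus K_1$ that respects the $\Z/2$-grading. It carries $K_*(A)^+$ onto $K_*(B)^+$, because the generators $([p],[u+(1_r-p)])$ are defined functorially. It also sends $([1_A],0)$ to $([1_B],0)$. For the \emph{if} direction I would run Elliott's approximate intertwining argument. Write $A=\varinjlim(A_n,\phi_{n,m})$ and $B=\varinjlim(B_n,\psi_{n,m})$ with circle algebras at each stage, and fix an isomorphism $\Lambda:K_*(A)\to K_*(B)$ of scaled graded ordered groups. The goal is to build sequences $n_1<n_2<\cdots$, $m_1<m_2<\cdots$ and homomorphisms $\alpha_k:A_{n_k}\to B_{m_k}$, $\beta_k:B_{m_k}\to A_{n_{k+1}}$. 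The diagrams should commute on $K_*$ with $\Lambda$ and $\Lambda^{-1}$, and commute approximately on prescribed finite sets after conjugation by unitaries. Elliott's intertwining lemma then gives an isomorphism $A\cong B$ inducing $\Lambda$.

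Two ingredients are needed. The first is an \emph{existence theorem}. Let $D$ be a circle algebra and let $\kappa:K_*(D)\to K_*(B)$ preserve the grading, the graded order and the unit. Since $K_*(D)$ is finitely generated, $\kappa$ factors through some $K_*(B_m)$ as a map that is positive at the finite stage, after enlarging $m$. Since $K_*(D)$ is free, this factorization can be made exactly compatible with order after moving further out. The factored map is then realized by a unital homomorphism $D\to B_m$. On each circle-to-circle partial map, the $K_0$ multiplicity $a$ and the degree $b$ are realized by a diagonal map built from $z\mapsto z^{b}$ together with $a-1$ point evaluations, which exists because $a>0$ whenever $b\neq0$ by the description of the positive cone. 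Partial maps with finite-dimensional target or source are handled by point evaluations or by the unit's $K_0$ class. Positivity in the graded order is exactly what guarantees $a\geq1$ in the blocks where a nonzero degree must occur.

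The second ingredient, and the main obstacle, is a \emph{uniqueness theorem}. Let $\mu,\nu:D\to B_m$ be unital homomorphisms from a circle algebra that agree on $K_*$ (equivalently on $KK$, since the $K$-groups of $D$ are free and the UCT gives $\operatorname{Ext}=0$). Then for every finite $F\subseteq D$ and $\varepsilon>0$ there should be $r\geq m$ and a unitary $u$ with $\|u\psi_{m,r}\mu(f)u^*-\psi_{m,r}\nu(f)\|<\varepsilon$ for $f\in F$. I would prove this using Theorem~\ref{thm:small-variation}: choose $r$ with $\SPV(\psi_{m,r})<\delta$. Then the spectra of $\psi_{m,r}\mu$ and $\psi_{m,r}\nu$ in each fiber are multisets that are each within $\delta$ of constant multisets, up to the winding part dictated by the degree. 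The degree is the same for both maps by $K_1$-agreement, and multiplicities agree by $K_0$-agreement. One then matches the eigenvalues by a permutation, which is possible precisely because the small variation makes the spectra nearly constant. Next, one separates a homotopically nontrivial ``degree'' summand of the form $z\mapsto z^b$ that is common to both maps. Finally, one conjugates by a continuous unitary path on $\T$; its existence needs care with the monodromy of the eigenvalue permutation around the circle. The delicate points are this monodromy and the uniformity of the estimate in the degree. First I would try absorbing the nontrivial part into a cyclic-shift model, whose eigenvalues are rotated once per circuit. Assuming both ingredients, the standard back-and-forth construction completes the proof.
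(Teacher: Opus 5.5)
The paper does not prove this theorem. It quotes it from Elliott and Elliott--Gong, so your sketch has to stand on its own. Its architecture (existence plus uniqueness feeding an approximate intertwining) is the standard one, and the \emph{only if} direction and the existence step are essentially fine. There is one small correction in the existence step. $K_*(D)^+$ is not finitely generated, so positivity at a finite stage does not follow from finite generation of $K_*(D)$. It holds because, for these cones, positivity of $\kappa$ is detected by finitely many elements: $\kappa(1,1)$ for the circle summands of $D$ (in the coordinates of Definition~\ref{def:graded-invariant}) and $\kappa(1)$ for the finite-dimensional ones.

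The genuine gap is your uniqueness theorem, which is false as stated. Take $B=M_{2^\infty}=\varinjlim M_{2^m}$, which is an AT algebra of real rank zero in the sense of Definition~\ref{def:AT-RR0}. Let $D=C(\T)$, $m\geq1$, and
\[
 \mu(f)=f(1)\,1_{2^m},\qquad
 \nu(f)=f(1)\,1_{2^{m-1}}\oplus f(-1)\,1_{2^{m-1}}.
\]
Both maps are unital and agree on $K_*$. Yet $\|u\psi_{m,r}\mu(\zeta)u^*-\psi_{m,r}\nu(\zeta)\|=2$ for every $r\geq m$ and every unitary $u$.

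The failure is not confined to finite-dimensional stages or to degree zero. Take any unital $\mu:C(\T)\to B_m$ and a degree-one homeomorphism $\rho$ of $\T$. Then $\nu(f)=\mu(f\circ\rho)$ satisfies $K_*(\nu)=K_*(\mu)$. Now let $B=A_\theta$, with trace $\tau$ and canonical map $\iota_m:B_m\to B$, and suppose $\rho$ does not preserve the measure $\tau\circ\iota_m\circ\mu$. Your conclusion would make $\iota_m\mu$ and $\iota_m\nu$ approximately unitarily equivalent, which $\tau$ forbids.

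The step that fails is the eigenvalue matching. Suppose $\SP(\psi_{m,r})_y$ is close to a constant multiset $\{c_k\}$. Then the fibre spectra of $\psi_{m,r}\mu$ and $\psi_{m,r}\nu$ are close to $\bigcup_k\SP\mu_{c_k}$ and $\bigcup_k\SP\nu_{c_k}$, and nothing makes these two close to each other. Agreement on $K_*$ fixes multiplicities and degrees, not the positions of eigenvalues, and pushing forward along $B$ cannot change that.

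What is missing is that real rank zero must enter uniqueness on the \emph{domain} side. The usable statement compares $\mu\circ\phi_{n,n'}$ with $\nu\circ\phi_{n,n'}$, where $\phi_{n,n'}$ is a connecting map of the source system with small spectral variation. Theorem~\ref{thm:small-variation}, applied to the domain algebra, supplies such maps. In the intertwining this costs nothing, since approximate commutativity is only needed on finite sets coming from earlier stages.

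With this formulation, block by block, the fibre spectrum of $\mu\circ\phi_{n,n'}$ at each point is within $\SPV(\phi_{n,n'})$ of copies of one fixed multiset $\SP(\phi_{n,n'})_{z_0}$, repeated according to the multiplicities of $\mu$. The same holds for $\nu$, with the same multiplicities by $K_0$-agreement. Only then can eigenvalues be matched.

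Even so, constructing the conjugating unitary is the substantive part of Elliott's argument. It requires splitting off the nonzero-degree part, controlling the monodromy of the matching around $\T$, and keeping the estimates uniform in the multiplicity. Your proposal names these issues without resolving them. As written, the uniqueness theorem, and with it the \emph{if} direction, is not established.
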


The scaled ordered $K_*$-group is continuous for unital inductive limits.
In particular, two systems with the same building blocks and the same
induced $K_0$ and $K_1$ maps have isomorphic scaled ordered $K_*$-groups
at the limit.

\section{Homomorphisms with prescribed K-theory}

We first relate nonzero degree and small spectral variation to multiplicity.
We then construct cyclic model maps with the same $K$-theory as the given
map on each pair of blocks.

\begin{definition}
Let $p\in M_t(C(\T))$ be a projection of constant rank $sN$, and let
$$
 \psi:M_s(C(\T))\longrightarrow pM_t(C(\T))p
$$
be a unital homomorphism.  Let
$\zeta\in C(\T)$ be the coordinate function with $\zeta(z)=z$.  The degree of
$\psi$ is
$$
 d_{\psi}
 =\operatorname{wind}\det\!\left(
 \psi(\diag(\zeta,1,\ldots,1))+(1_{M_t(C(\T))}-p)
 \right).
$$
Equivalently, the induced $K_1$-map is
$$
K_1(\psi)\,:\,  K_1(M_s(C(\T)))\cong\mathbb{Z}\xrightarrow{\times d_{\psi}}K_1(M_t(C(\T)))\cong\mathbb{Z}.
$$
For the zero homomorphism, we set $N=d_{\psi}=0$.
\end{definition}

We use the standard identifications
$$
 K_0(M_s(C(\T)))\cong\Z,
 \qquad
 K_1(M_s(C(\T)))\cong\Z,
$$
with generators $[e_{11}]$ and $[\diag(\zeta,1,\ldots,1)]$, respectively.
Note that
the matrix-valued Laurent polynomials are dense in $M_s(C(\T))$.

\begin{lemma}\label{lem:spectral-cover}
Let
$$
 \psi:M_s(C(\T))\longrightarrow M_{sN}(C(\T))
$$
be a unital homomorphism with nonzero degree.  Let $0<\delta<1$ and
suppose that $\SPV(\psi)<\delta$.  Then
$$
 N\geq\frac{\pi}{2\arcsin(\delta/2)}.
$$
\end{lemma}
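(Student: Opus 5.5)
Reduce to the case $s=1$. Write $\psi_1(f)=\psi(e_{11}\otimes f)$, a unital homomorphism from $C(\T)$ into the corner $P M_{sN}(C(\T))P$ with $P=\psi(e_{11}\otimes 1_{\T})$. Since $\psi$ is unital, $P$ has constant rank $N$.

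The degree of $\psi$ equals the winding number of $\det(\psi_1(\zeta)+(1-P))$. Indeed, $\diag(\zeta,1,\ldots,1)=e_{11}\otimes\zeta+\sum_{i\ge2}e_{ii}\otimes1$, so $\psi$ sends it to $\psi_1(\zeta)+(1-P)$. At each $z$, the unitary $\psi_1(\zeta)(z)+(1-P_z)$ has eigenvalues $x_1(z),\ldots,x_N(z)$ together with $1$'s. Hence its determinant is $\prod_{i=1}^N x_i(z)$, and the assumption becomes
$$
d_\psi=\operatorname{wind}\Bigl(z\mapsto \prod_{i=1}^N x_i(z)\Bigr)\neq0 .
$$
This function is continuous in $z$, because it is the determinant of a continuous unitary path.

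The key step is to bound how far the product $\prod_i x_i(z)$ can move in terms of $\SPV(\psi)$. For any $z,z'$, choose the matching permutation $\sigma$ with $|x_i(z)-x_{\sigma(i)}(z')|<\delta$ for all $i$. A chord of length $<\delta$ corresponds to an arc angle $<2\arcsin(\delta/2)$. So we may write $x_{\sigma(i)}(z')=x_i(z)e^{i\theta_i}$ with $|\theta_i|<2\arcsin(\delta/2)$. Then
$$
\prod_i x_i(z')=\prod_i x_i(z)\cdot e^{i\sum\theta_i},\qquad \Bigl|\sum\theta_i\Bigr|<2N\arcsin(\delta/2).
$$

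Suppose, for contradiction, that $2N\arcsin(\delta/2)<\pi$. Fix a base point $z_0$ and put $w_0=\prod_i x_i(z_0)$. By the bound above, the continuous map $g(z)=\prod_i x_i(z)$ takes values in the open arc $\{w_0e^{i\theta}:|\theta|<\pi\}$ for every $z$. This arc misses the point $-w_0$. A loop in $\T\setminus\{-w_0\}$, which is contractible, has winding number zero, contradicting $d_\psi\neq0$. Therefore $2N\arcsin(\delta/2)\ge\pi$, which is exactly the claimed inequality $N\geq\pi/(2\arcsin(\delta/2))$. Note that the eigenvalues are never labelled continuously around the circle: only the matched differences at each pair of fibres are used, so a monodromy permutation of the $x_i$ does no harm.

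The main obstacle is the conversion from chordal to angular distance and keeping the inequality strict. The hypothesis $\SPV(\psi)<\delta$ is a supremum of strict minima, so it gives $d_N<\delta$ for each pair $(z,z')$. This yields the strict bound $|\sum\theta_i|<2N\arcsin(\delta/2)\le\pi$ in the contradiction case, which is enough to exclude $-w_0$. The condition $0<\delta<1$ ensures that $\arcsin(\delta/2)$ is defined and small.
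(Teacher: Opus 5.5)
Your proof is correct, but it takes a different route from the paper.

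\textbf{The paper's route.} The paper first uses $K_1$. The unitary $U=\psi(1_s\otimes\zeta)$ has class $s\,d_\psi\neq0$, so it cannot have a spectral gap: otherwise $U=e^{ih}$ would be homotopic to $1$. Hence $\SP(\psi)=\sigma(U)=\T$. Combined with $\SPV(\psi)<\delta$, this makes every single fiber $\SP\psi_{z_0}$ a chordal $\delta$-net. A covering count with arcs of angular length $4\arcsin(\delta/2)$ then gives $4N\arcsin(\delta/2)\geq2\pi$.

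\textbf{Your route.} You work only with the scalar loop $z\mapsto\prod_i x_i(z)=\det\bigl(\psi_1(\zeta)+(1-P)\bigr)$. Its winding number is $d_\psi$ by the definition of degree. You bound its angular deviation from the base value $w_0$ by the sum of the $N$ matched eigenvalue displacements. Nonzero winding makes the loop surjective, so it must hit $-w_0$, which the bound excludes.

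\textbf{What each approach buys.} Your version is more elementary. It needs no logarithm or functional calculus, and no identification $[1_s\otimes\zeta]=s[\diag(\zeta,1,\ldots,1)]$ in $K_1$. It is also visibly insensitive to eigenvalue monodromy. As your closing remark implicitly shows, it gives the strict inequality $2N\arcsin(\delta/2)>\pi$: the bound $|\sum_i\theta_i|<2N\arcsin(\delta/2)\leq\pi$ already rules out $-w_0$. The paper's version gives geometric information that yours does not: every fiber of $\psi$ is $\delta$-dense in $\T$, not merely large in cardinality. Only the cardinality bound is used later, in Proposition~\ref{prop:local-model}, so either proof serves.
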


\begin{proof}
For each $z\in\T$, write
$$
 \SP\psi_z=\{x_1(z),\ldots,x_N(z)\}\,\,\text{(counting multiplicity)}.
$$
Let $U\in M_{sN}(C(\T))$ be defined by
$$
 U=\psi\left(\left(
 \begin{array}{ccc}
   \zeta&&\\
   &\ddots&\\
   &&\zeta
 \end{array}
 \right)\right).
$$
Since $\zeta(z)=z$, the eigenvalues of $U(z)$ are the points
$x_1(z),\ldots,x_N(z)$, each repeated $s$ times.

We prove that $\SP(\psi)=\T$.  First,
$$
 \SP(\psi)=\bigcup_{z\in\T}\SP\psi_z
          =\bigcup_{z\in\T}\sigma(U(z))=\sigma(U)\subseteq\T.
$$
Indeed, $U-\lambda1_{sN}$ is invertible if and only if every
$U(z)-\lambda1_{sN}$ is invertible: the pointwise inverses form a
continuous matrix-valued function.  Hence $\SP(\psi)$ is compact.

Suppose that $\lambda\in\T\setminus\sigma(U)$.  A continuous argument
on $\T\setminus\{\lambda\}$ and functional calculus give a self-adjoint
element $h\in M_{sN}(C(\T))$ such that $U=e^{ih}$.  The path
$t\mapsto e^{ith}$ joins $1_{sN}$ to $U$, so
$$
 [U]=0\quad\text{in }K_1(M_{sN}(C(\T)))\cong\Z.
$$
On the other hand,
$$
 [1_s\otimes\zeta]
 =s[\diag(\zeta,1,\ldots,1)]
 \quad\text{in }K_1(M_s(C(\T))),
$$
and therefore
$$
 [U]=K_1(\psi)([1_s\otimes\zeta])=s d_\psi\neq0.
$$
This is a contradiction.

Now fix a base point $z_0\in\T$.  Let $\lambda\in\T$.  There are
$z\in\T$ and $r\in\{1,\ldots,N\}$ such that $x_r(z)=\lambda$.
Since $\SPV(\psi)<\delta$, there is a permutation $\sigma\in S_N$ with
$$
 |x_k(z)-x_{\sigma(k)}(z_0)|<\delta
 \qquad(1\leq k\leq N).
$$
Taking $k=r$ gives
$$
 |\lambda-x_{\sigma(r)}(z_0)|<\delta.
$$
Thus $\{x_1(z_0),\ldots,x_N(z_0)\}$ is a chordal $\delta$-net in $\T$.

If $\mu=e^{i\beta}\in\T$, then
$$
 |e^{i\alpha}-\mu|<\delta
 \quad\Longleftrightarrow\quad
 \left|\sin\frac{\alpha-\beta}{2}\right|<\frac{\delta}{2}.
$$
Each chordal ball of radius $\delta$ intersects $\T$ in an arc of
angular length $4\arcsin(\delta/2)$.  The $N$ arcs cover $\T$, so
$$
 4N\arcsin(\delta/2)\geq2\pi.
$$
\end{proof}

For $N\geq1$ and $d\in\Z$, define a unitary
$$
 S_{N,d}(z)=\sum_{k=1}^{N-1}e_{k,k+1}+z^d e_{N,1}
 =
 \begin{pmatrix}
 0&1&0&\cdots&0\\
 0&0&1&\ddots&\vdots\\
 \vdots&\ddots&\ddots&\ddots&0\\
 0&\cdots&0&0&1\\
 z^d&0&\cdots&0&0
 \end{pmatrix}.
$$
When $N=1$, the sum is empty and $S_{1,d}(z)=z^d$.

Moreover,
$$
 \det S_{N,d}(z)=(-1)^{N-1}z^d.
$$

\begin{definition}\label{def:cyclic-map}
Let $N,s\geq1$ and $d\in\Z$.  Define the \emph{standard cyclic
homomorphism} $\Psi_{N,d}^{(s)}$ by
\begin{align*}
 \Psi_{N,d}^{(s)}:M_s\otimes C(\T)&\longrightarrow M_s\otimes M_N(C(\T)),\\
 a\otimes1_{\T}&\longmapsto a\otimes1_N,\\
 1_s\otimes\zeta&\longmapsto1_s\otimes S_{N,d}.
\end{align*}

 In particular, if
$P=\sum_{|\ell|\leq L}a_\ell\otimes\zeta^\ell$ with each $a_\ell\in M_s$, then
$$
 \Psi_{N,d}^{(s)}(P)
 =\sum_{|\ell|\leq L}a_\ell\otimes S_{N,d}^{\ell}.
$$
\end{definition}
The induced $K$-theory maps are as follows.
\begin{proposition}\label{prop:cyclic-K}
The map $\Psi_{N,d}^{(s)}$ induces multiplication by $N$ on $K_0$ and
multiplication by $d$ on $K_1$.
\end{proposition}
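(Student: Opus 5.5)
The plan is to compute the two $K$-theory maps separately, using the generators fixed just above: $[e_{11}]$ for $K_0(M_s(C(\T)))$ and $[\diag(\zeta,1,\ldots,1)]$ for $K_1(M_s(C(\T)))$. For the target $M_s\otimes M_N(C(\T))\cong M_{sN}(C(\T))$, the generators are the rank-one constant projection and the unitary $\diag(\zeta,1,\ldots,1)$. We then read off the degree via the winding number of the determinant, as in the definition of $d_\psi$.

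\emph{The $K_0$ map.} By definition, $\Psi_{N,d}^{(s)}(e_{11}\otimes1_{\T})=e_{11}\otimes1_N$. This is a constant projection of rank $N$ in $M_{sN}(C(\T))$. Since $K_0(M_{sN}(C(\T)))\cong\Z$ is detected by the rank (the circle is connected and $K_0(C(\T))\cong\Z$ via rank), its class is $N$ times the generator. This gives multiplication by $N$.

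\emph{The $K_1$ map.} First check that $\Psi_{N,d}^{(s)}$ is a well-defined unital homomorphism. Since $S_{N,d}$ is a unitary in $M_N(C(\T))$, $1_s\otimes S_{N,d}$ commutes with $M_s\otimes1_N$, so the universal property of $M_s\otimes C(\T)$ gives the map. Next compute the image of $u=\diag(\zeta,1,\ldots,1)=e_{11}\otimes\zeta+(1_s-e_{11})\otimes1$:
\[
 \Psi_{N,d}^{(s)}(u)=e_{11}\otimes S_{N,d}+(1_s-e_{11})\otimes1_N .
\]
This is block diagonal with one block $S_{N,d}$ and the rest identity, so its determinant is
\[
 \det S_{N,d}(z)=(-1)^{N-1}z^d .
\]
The winding number of this determinant is $d$. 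Since $K_1(M_{sN}(C(\T)))\cong\Z$ is identified by winding number of the determinant, with $\diag(\zeta,1,\ldots,1)$ having winding number $1$, the class of the image is $d$ times the generator. Hence $K_1(\Psi_{N,d}^{(s)})$ is multiplication by $d$, consistent with the definition of degree: here $p=1$ and $d_\psi=d$.

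The only point requiring care is the justification that $\det$-winding number is an isomorphism $K_1(M_m(C(\T)))\to\Z$. This is standard: $\operatorname{wind}\det$ is a homotopy invariant, multiplicative, and compatible with stabilization, and it sends the generator to $1$. I expect no real obstacle. The constant sign $(-1)^{N-1}$ does not affect winding, and for $N=1$ the formula reduces to $z^d$ directly.
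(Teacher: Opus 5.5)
Your proposal is correct and follows the paper's proof essentially verbatim: the rank-$N$ projection $e_{11}\otimes1_N$ gives multiplication by $N$ on $K_0$, and the winding number of $\det S_{N,d}(z)=(-1)^{N-1}z^d$ applied to the image of $\diag(\zeta,1,\ldots,1)$ gives multiplication by $d$ on $K_1$. Your extra checks, that $\Psi_{N,d}^{(s)}$ is well defined and that the determinant winding number identifies $K_1$, are harmless additions that the paper takes for granted.
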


\begin{proof}
The generator $[e_{11}\otimes1]$ of the source $K_0$-group is sent to
$[e_{11}\otimes1_N]$. This projection has rank $N$, so its class is
$N$ times the rank-one generator of the target $K_0$-group.
For $K_1$, apply the map to the standard generator
$u=\diag(\zeta,1,\ldots,1)$. Its image is
$$
 \Psi_{N,d}^{(s)}(u)
 =e_{11}\otimes S_{N,d}+(1_s-e_{11})\otimes1_N.
$$
Under the determinant description of
$K_1(M_{sN}(C(\T)))\cong\Z$, its class is the winding number of
$\det S_{N,d}(z)=(-1)^{N-1}z^d$, which is $d$.

Thus, the two integers have distinct roles: $N$ counts multiplicity,
whereas $d$ measures winding.
\end{proof}

The following lemma fixes the tensor coordinates used in the two-cut
calculation; see \cite[Section~1.3.4]{EG}.

\begin{lemma}\label{lem:straightening}
Let
$$
 \alpha:M_k(C(\T))\longrightarrow pM_t(C(\T))p
$$
be a unital homomorphism, and suppose that $\rank(p)=kR$. Then there is
a $C(\T)$-linear identification
$$
 pM_t(C(\T))p\cong M_k\otimes M_R(C(\T))
$$
under which
$$
 \alpha(a\otimes1_{\T})=a\otimes1_R,
 \qquad
 \alpha(1_k\otimes\zeta)=1_k\otimes V
$$
for every $a\in M_k$ and for some unitary $V\in M_R(C(\T))$.
\end{lemma}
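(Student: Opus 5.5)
The plan is to trivialize the projection $p$ as a bundle, and then use the matrix units $\alpha(e_{ij}\otimes 1_{\T})$ to split off a tensor factor $M_k$.

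\textbf{Step 1: trivialize $p$.} Every projection in $M_t(C(\T))$ of constant rank is unitarily equivalent, via a unitary in $M_t(C(\T))$, to a constant diagonal projection. The reason is that complex vector bundles over $\T$ are trivial, since $\T$ is one-dimensional and connected and $U(n)$ is connected. Choose $W\in M_t(C(\T))$ with $WpW^*=1_{kR}\oplus0$. Then
$$
pM_t(C(\T))p\cong M_{kR}(C(\T)),
$$
and this isomorphism is $C(\T)$-linear.

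\textbf{Step 2: split off $M_k$.} Now $\alpha(e_{11}\otimes1_{\T})=:q$ is a projection in $M_{kR}(C(\T))$. Since $\alpha$ is unital, the projections $\alpha(e_{ii}\otimes1)$ are mutually equivalent, have sum $1$, and each has constant rank $R$. Apply Step 1 again to get a partial isometry $w\in M_{kR}(C(\T))$ with $w^*w=q$ and $ww^*=\mathrm{diag}(1_R,0,\ldots)$; in other words, $q$ is identified with a trivial bundle of rank $R$. Define
$$
v_i=\alpha(e_{i1}\otimes1)\,w^*,
$$
where $w^*$ runs from the first coordinate block $1_R\oplus0$ of $\C^{kR}$ onto $q$. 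Then $v_i$ is an isometry from $\C^R$-valued sections into the range of $\alpha(e_{ii}\otimes1)$. Put
$$
\Phi=\sum_i e_i\otimes v_i,
$$
that is, the unitary in $M_{kR}(C(\T))$ whose $i$-th block column is $v_i$. The orthogonality relations make $\Phi$ a unitary, and conjugation by $\Phi$ gives
$$
\Phi^*\alpha(e_{ij}\otimes1)\Phi=e_{ij}\otimes1_R.
$$
This is the standard computation: $v_i^*\alpha(e_{ij}\otimes1)v_j=w\,\alpha(e_{1i}e_{ij}e_{j1})w^*=wqw^*=1_R$, and the off-diagonal blocks vanish.

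\textbf{Step 3: identify $V$.} The element $\alpha(1_k\otimes\zeta)$ commutes with $\alpha(M_k\otimes1)$. After the identification it therefore commutes with $M_k\otimes1_R$, so it lies in the relative commutant, which is $1_k\otimes M_R(C(\T))$. Explicitly,
$$
V=w\,\alpha(e_{11}\otimes\zeta)\,w^*,
$$
which is unitary because $\alpha$ is unital and $\zeta$ is unitary. Since $M_k\otimes1$ and $1_k\otimes\zeta$ generate $M_k(C(\T))$, the formulas in the statement determine $\alpha$.

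\textbf{Main obstacle.} The only nontrivial input is the triviality of constant-rank projections over $\T$, used in Steps 1 and 2. This holds because an $n$-dimensional bundle over $S^1$ is classified by $\pi_0(U(n))=0$. Concretely, one can take a continuous orthonormal frame on $[0,1]$ and correct the mismatch at the endpoint by a path in the connected group $U(R)$.
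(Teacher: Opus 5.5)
Your proposal is correct and follows essentially the same route as the paper: a continuous frame for $q=\alpha(e_{11}\otimes 1_{\T})$ (built on $[0,1]$ and corrected by a path in $U(R)$) is transported by the matrix units $\alpha(e_{a1}\otimes 1_{\T})$ to a frame for $p$ adapted to $\alpha$, and $\alpha(1_k\otimes\zeta)$ is then placed in the commutant $1_k\otimes M_R(C(\T))$. The only difference is your preliminary trivialization of $p$ in Step 1, which is harmless but unnecessary; the paper skips it by working directly with the rectangular matrix $W=(v_1\ \cdots\ v_k)$, which satisfies $W^*W=1_{kR}$ and $WW^*=p$.
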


\begin{proof}
Put $f_{ab}=\alpha(e_{ab}\otimes1)$ and $q=f_{11}$.
The projections $f_{aa}$ are mutually equivalent, are orthogonal,
and sum to $p$. Consequently $q$ has constant rank $R$.
Choose a continuous orthonormal frame for its range, written as a
rectangular matrix $v\in M_{t,R}(C(\T))$ with
$v^*v=1_R$ and $vv^*=q$.

For completeness, such a frame can be constructed by pulling $q$
back to the interval $[0,1]$. A continuous frame exists on the
interval. Its endpoint frames differ by a unitary $u\in U(R)$.
Multiplying the frame by a path from $1_R$ to $u^*$ makes the
endpoint frames agree, so the corrected frame descends to $\T$.
Here we use that $U(R)$ is path connected.

For $1\leq a\leq k$, set $v_a=f_{a1}v$ and form the rectangular
matrix $W=(v_1\ \cdots\ v_k)$. The matrix-unit relations give
$$
 v_a^*v_b=\delta_{ab}1_R,\qquad
 W^*W=1_{kR},\qquad WW^*=p.
$$
Hence $b\mapsto W^*bW$ is a $C(\T)$-linear isomorphism from
$pM_t(C(\T))p$ onto $M_k\otimes M_R(C(\T))$, and
$$
 W^*f_{ab}W=e_{ab}\otimes1_R.
$$
The unitary $\alpha(1_k\otimes\zeta)$ commutes with every $f_{ab}$.
After this identification it belongs to the commutant of
$M_k\otimes1_R$, which is $1_k\otimes M_R(C(\T))$.
It therefore has the required form $1_k\otimes V$, with $V$ unitary.
\end{proof}

\begin{lemma}\label{lem:full-model}
Let $\phi:M_s(C(\T))\to M_{sN}(C(\T))$ be a unital homomorphism of
degree $d$. Then
$$
 KK(\phi)=KK(\Psi_{N,d}^{(s)}),
 \qquad
 \SPV(\Psi_{N,d}^{(s)})\leq2\sin\frac{\pi}{2N}.
$$
If $d=0$, the cyclic map has finite-dimensional range and spectral
variation zero.
\end{lemma}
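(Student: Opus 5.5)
The proof has three parts: the $KK$-class, the spectral variation bound, and the case $d=0$.

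For the $KK$-class, note that both $M_s(C(\T))$ and $M_{sN}(C(\T))$ satisfy the UCT and have free $K$-groups $\Z$ in each degree. By the UCT, the Ext term vanishes and a $KK$-class between them is determined by the induced maps on $K_0$ and $K_1$. So it suffices to show that $\phi$ and $\Psi_{N,d}^{(s)}$ induce the same maps.
\begin{itemize}
\item On $K_0$: $\phi$ is unital into $M_{sN}(C(\T))$, so $\phi(e_{11}\otimes 1)$ is a projection of constant rank $N$. Hence $K_0(\phi)$ is multiplication by $N$.
\item On $K_1$: $K_1(\phi)$ is multiplication by $d$, by the definition of degree.
\end{itemize}
Proposition~\ref{prop:cyclic-K} gives exactly these two maps for $\Psi_{N,d}^{(s)}$, so $KK(\phi)=KK(\Psi_{N,d}^{(s)})$.

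For the spectral variation, the definition tells us to compute the spectrum of the corner map $(\Psi_{N,d}^{(s)})_1$. This map sends $\zeta$ to $e_{11}\otimes S_{N,d}$ inside the corner $e_{11}\otimes M_N(C(\T))\cong M_N(C(\T))$. Its spectrum at $z$ is therefore the eigenvalue multiset of $S_{N,d}(z)$.

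We compute these eigenvalues. One checks that $S_{N,d}(z)^N=z^d 1_N$, and that the characteristic polynomial is $\lambda^N-z^d$ up to sign, consistent with $\det S_{N,d}(z)=(-1)^{N-1}z^d$. Hence the spectrum at $z$ is the full set of $N$-th roots of $z^d$, namely
$$
\{w\,\omega^j : j=0,\ldots,N-1\},\qquad \omega=e^{2\pi i/N},
$$
where $w$ is any fixed $N$-th root of $z^d$. For two points $z,z'$ the spectra are the rotations $w\{\omega^j\}$ and $w'\{\omega^j\}$ of the regular $N$-gon. Multiplying $w'$ by a suitable power of $\omega$, we may assume the angle between $w$ and $w'$ is at most $\pi/N$. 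Matching $w\omega^j$ with $w'\omega^j$ then gives chordal distances $|w-w'|\le 2\sin(\pi/(2N))$. Taking the supremum over $z,z'$ gives the bound
$$
\SPV(\Psi_{N,d}^{(s)})\leq 2\sin\frac{\pi}{2N},
$$
independently of $d$. The only delicate point is that the matching permutation is a cyclic relabeling that depends on the pair $(z,z')$. This is allowed, since the definition of $\SPV$ minimizes over permutations separately for each pair of fibers.

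For $d=0$, the matrix $S_{N,0}$ is a constant permutation matrix. The range of $\Psi_{N,0}^{(s)}$ is generated by $M_s\otimes 1_N$ and $1_s\otimes S_{N,0}$, all constant, so it lies in the constants $M_s\otimes M_N$ and is finite-dimensional. The spectrum at every $z$ is the same set $\{\omega^j\}$, so the spectral variation is zero.
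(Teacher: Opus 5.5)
Your proof is correct and follows the paper's route. The $KK$-classes agree by the UCT, since the $K$-groups are free and both maps induce multiplication by $N$ on $K_0$ and by $d$ on $K_1$. The spectral bound comes from identifying each fiber spectrum with the $N$-th roots of $z^d$ and rotating one fiber onto another by an angle of at most $\pi/N$.

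Your choice of the nearest $N$-th root is the same as the paper's rotation by $e^{it/N}$ with $t\in[-\pi,\pi]$. Your rank-$N$ argument for $K_0(\phi)$ makes explicit a step the paper leaves implicit.
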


\begin{proof}
Both maps induce multiplication by $N$ on $K_0$ and by $d$ on $K_1$,
by the definition of degree and Proposition~\ref{prop:cyclic-K}.
Since
$$
 K_0(M_s(C(\T)))\cong\Z,
 \qquad K_1(M_s(C(\T)))\cong\Z,
$$
the $\operatorname{Ext}^1$ terms in the universal coefficient theorem
vanish. Thus the $KK$-class is determined by the induced $K_0$- and
$K_1$-maps \cite{RS}, so $KK(\phi)=KK(\Psi_{N,d}^{(s)})$.

If $d=0$, then $S_{N,0}$ is a constant matrix and
$$
 \Psi_{N,0}^{(s)}(M_s(C(\T)))
 =M_s\otimes C^*(S_{N,0})\subseteq M_{sN}(\C).
$$
The range is finite-dimensional and the fiber spectra are constant.
Hence $\SPV(\Psi_{N,0}^{(s)})=0$.

Suppose that $d\neq0$. For $z\in\T$,
$$
 \det(\lambda1_N-S_{N,d}(z))=\lambda^N-z^d.
$$
There are $N$ distinct roots of $\lambda^N=z^d$, and these are exactly
the eigenvalues of $S_{N,d}(z)$, each with multiplicity one.
Thus Definition~\ref{def:spectrum-spv} gives
$$
 \SP(\Psi_{N,d}^{(s)})_z=\{\lambda\in\T:\lambda^N=z^d\},
$$
with each root counted once, since the spectrum is defined using the
$e_{11}$-corner.

Given $z,w\in\T$, choose $t\in[-\pi,\pi]$ such that
$$
 z^d=e^{it}w^d.
$$
Multiplication by $e^{it/N}$ matches the spectrum at $w$ with the
spectrum at $z$. Therefore,
$$
 d_N\bigl(\SP(\Psi_{N,d}^{(s)})_z,\SP(\Psi_{N,d}^{(s)})_w\bigr)
 \leq |e^{it/N}-1|
 =2\left|\sin\frac{t}{2N}\right|
 \leq2\sin\frac{\pi}{2N}.
$$
Taking the supremum over $z,w\in\T$ proves the estimate.
\end{proof}

We now construct the model map for a homomorphism between circle algebras.
Let $\phi:B\to C$ be a unital homomorphism, and write
$$
 B=\bigoplus_{i=1}^{m}B_i,
 \qquad C=\bigoplus_{j=1}^{n}C_j,
$$
where each $B_i$ is either $M_{s_i}(C(\T))$ or $M_{s_i}$, and each
$C_j$ is either $M_{t_j}(C(\T))$ or $M_{t_j}$.

Set $q_{ij}=\phi^{i,j}(1_{B_i})$. Suppose that $B_i$ and $C_j$ are
circle blocks and $q_{ij}\ne0$. By Lemma~\ref{lem:straightening},
there is a $C(\T)$-linear isomorphism
$$
 \alpha_{i,j}:q_{ij}C_jq_{ij}
 \longrightarrow M_{s_i}\otimes M_{N_{ij}}(C(\T)),
$$
where $N_{ij}=\rank(q_{ij})/s_i$ is the multiplicity of
$\alpha_{i,j}\circ\phi^{i,j}$. Let $d_{ij}$ be its degree.
The homomorphism
$$
 \alpha_{i,j}^{-1}\circ\Psi_{N_{ij},d_{ij}}^{(s_i)}
 :B_i\longrightarrow q_{ij}C_jq_{ij}
$$
has the same multiplicity and degree. If $d_{ij}=0$, it has finite-dimensional range. If $B_i$ is a circle block and $C_j$ is finite-dimensional, we retain $\phi^{i,j}$. We also retain $\phi^{i,j}$
whenever $B_i$ is finite-dimensional.

Thus, for $b=\bigoplus_{i=1}^{m}b_i\in B$, define
$$
 \psi^{i,j}(b_i)=
 \begin{cases}
  0,&q_{ij}=0,\\[2mm]
  \alpha_{i,j}^{-1}\circ\Psi_{N_{ij},d_{ij}}^{(s_i)}(b_i),
    &q_{ij}\ne0\text{ and }B_i,C_j\text{ are circle blocks},\\[2mm]
  \phi^{i,j}(b_i),
    &q_{ij}\ne0,\ B_i\text{ or }C_j\text{ is finite-dimensional},
 \end{cases}
$$
and put
$$
 \psi(b)=\bigoplus_{j=1}^{n}
 \left(\sum_{i=1}^{m}\psi^{i,j}(b_i)\right).
$$
Each partial map has support $q_{ij}$. For fixed $j$, these supports
are mutually orthogonal and sum to $1_{C_j}$. Thus $\psi:B\to C$ is
a unital homomorphism, which we call a \emph{model map} for $\phi$.

For every $(i,j)$, we have
$$
 K_*(\psi^{i,j})=K_*(\phi^{i,j}).
$$
Lemma~\ref{lem:full-model} also gives
$$
 KK(\psi^{i,j})=KK(\phi^{i,j}),
 \qquad KK(\psi)=KK(\phi).
$$
If $B_i$ and $C_j$ are circle blocks and $q_{ij}\ne0$, the
identification $\alpha_{i,j}$ preserves fiber spectra. Hence
Lemma~\ref{lem:full-model} implies
$$
 \SPV(\psi^{i,j})\leq2\sin\frac{\pi}{2N_{ij}}.
$$
Its spectral variation is zero when $d_{ij}=0$. The partial maps
involving a finite-dimensional block also have spectral variation zero.
Consequently,
$$
 \SPV(\psi)\leq
 \max\left\{2\sin\frac{\pi}{2N_{ij}}:
 \begin{array}{c}
 B_i,C_j\text{ are circle blocks},\\
 q_{ij}\ne0,\ d_{ij}\ne0
 \end{array}\right\},
$$
where the maximum is defined to be zero if the indexing set is empty.
All other partial components have finite-dimensional range.

\section{The three-corner construction}

We construct the three finite-dimensional algebras for a cyclic map,
including the case of composition with an outer homomorphism. We then combine
the constructions over the partial maps of a model map.

\begin{lemma}\label{lem:two-cut}
Let
$$
 \alpha:M_{sN}(C(\T))\longrightarrow pM_t(C(\T))p
$$
be a unital homomorphism, and put
$\Theta=\alpha\circ\Psi_{N,d}^{(s)}$ (see Definition \ref{def:cyclic-map}).  Let $L\geq1$ be an integer, and assume that
$N>8L+4$.  Then there are finite-dimensional unital subalgebras
$C,D,E\subseteq pM_t(C(\T))p$ and a positive contraction $h\in E$ such
that $E\subseteq C\cap D$ and, for every Laurent polynomial
$P=\sum_{|\ell|\leq L}a_\ell\otimes\zeta^\ell$ with each $a_\ell\in M_s$, we have
$$
 h\Theta(P)\in C,
 \qquad
 (1-h)\Theta(P)\in D,
 \qquad
 h(1-h)\Theta(P)\in E,
$$
while
$$
 \|[h,\Theta(P)]\|
 \leq\frac{4}{N}\sum_{|\ell|\leq L}|\ell|\,\|a_\ell\|.
$$
\end{lemma}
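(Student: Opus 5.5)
The plan is to build everything at the level of $M_s\otimes M_N(C(\T))$ from scalar, $z$-independent diagonal matrices and the shift $S=S_{N,d}$, and then push forward by $\alpha$. Since $\alpha$ is a unital homomorphism, it is contractive and sends finite-dimensional unital subalgebras to finite-dimensional unital subalgebras of $pM_t(C(\T))p$. It also preserves inclusions, products and commutators. So it suffices to prove the statement with $\alpha=\mathrm{id}$, i.e.\ for $\Theta=\Psi^{(s)}_{N,d}$, and then set $C=\alpha(C_0)$, $D=\alpha(D_0)$, $E=\alpha(E_0)$, $h=\alpha(h_0)$.

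\textbf{Combinatorics of $S^\ell$.} Index the basis of $\C^N$ by $1,\dots,N$ arranged cyclically. The key observation is that for $|\ell|\le L$ the matrix $S^\ell(z)$ has entries only at positions $(k,k+\ell)$, indices taken mod $N$. The entry is $1$ unless the step from $k$ to $k+\ell$ crosses the cut between $N$ and $1$, in which case it is $z^{\pm d}$. Let $U(z)=\diag(u_1,\dots,u_N)$ with $u_k=1$ for $k\le N/2$ and $u_k=z^{-d}$ for $k>N/2$. Then $USU^*$ is again a cyclic shift, but now the factor $z^d$ sits on the middle cut between $\lfloor N/2\rfloor$ and $\lfloor N/2\rfloor+1$, and the wrap entry equals $1$. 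So there are two cuts, and for each cut there is a frame in which $S$ is constant away from that cut.

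\textbf{The choices.} Take $h_0=1_s\otimes\diag(h_1,\dots,h_N)$ with $h_k\in[0,1]$ constant in $z$, defined as follows.
\begin{itemize}
\item $h_k=0$ for $k$ within distance $L$ of the wrap cut.
\item $h_k=1$ for $k$ within distance $L$ of the middle cut.
\item In between, $h_k$ is linear on each of the four remaining ramps. Each ramp lies inside one half and is at distance $>L$ from both cuts.
\end{itemize}
The two plateaus use about $4L+2$ indices, leaving ramps of length $r\ge (N-4L-2)/2$. The hypothesis $N>8L+4$ gives $r\ge N/4$, so $|h_k-h_{k+1}|\le 4/N$. Define
\begin{itemize}
\item $C_0=M_s\otimes M_N(\C)$, the constant matrices;
\item $D_0=M_s\otimes U^*M_N(\C)U$;
\item $E_0=M_s\otimes\bigl(M_{\lfloor N/2\rfloor}(\C)\oplus M_{N-\lfloor N/2\rfloor}(\C)\bigr)$, the constant matrices that are block diagonal for the two halves.
\end{itemize}
These are unital and finite-dimensional. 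Since $U$ is scalar on each half, it commutes with $E_0$, so $E_0\subseteq C_0\cap D_0$, and clearly $h_0\in E_0$.

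\textbf{Verifications.}
\begin{itemize}
\item $h_0(a\otimes S^\ell)$ has nonzero rows only where $h_k\ne0$. From there a step of length $\le L$ does not cross the wrap cut, so all entries are constant and the product lies in $C_0$.
\item $(1-h_0)(a\otimes S^\ell)$ has rows only where $h_k<1$, which are at distance $>L$ from the middle cut. Conjugating by $U$ turns all its entries into constants, so it lies in $D_0$.
\item $h_0(1-h_0)(a\otimes S^\ell)$ is supported on ramp rows. From a ramp row, steps of length $\le L$ cross neither cut, so the entries are constant and block diagonal for the halves. Hence the product lies in $E_0$.
\item Linearity in $P$ then gives the three membership statements.
\item For the commutator, $[h_0,a_\ell\otimes S^\ell]=a_\ell\otimes T_\ell$, where $T_\ell$ is a weighted shift with weights $(h_k-h_{k+\ell})$ times unimodular scalars. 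Hence $\|T_\ell\|\le\max_k|h_k-h_{k+\ell}|\le |\ell|\cdot 4/N$. Summing over $\ell$ and applying the contraction $\alpha$ gives the stated bound.
\end{itemize}

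The main obstacle I expect is the index bookkeeping in the plateau and ramp layout. One has to make sure simultaneously that the ramps stay more than $L$ away from both cuts and that the slope is at most $4/N$; this is exactly where $N>8L+4$ enters, and rounding (the parity of $N$, the floors) must be handled carefully. A secondary point is that $U$ is a genuine unitary in $M_N(C(\T))$ for every $d$, so that $D_0$ is a bona fide finite-dimensional subalgebra. This holds because $u_k$ is either $1$ or $z^{-d}$, so no winding issue arises.
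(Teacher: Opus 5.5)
Your proposal is correct and follows the paper's proof: the same constant algebra $C$, the same $D$ obtained by conjugating with a diagonal unitary that moves the $z^d$ entry to the middle cut, the same block-diagonal $E$, and the same plateau-and-ramp diagonal $h$ (there are two ramps, not four), all checked by tracking which entries of $S_{N,d}^\ell$ cross which cut. The one difference is that you build everything in $M_s\otimes M_N(C(\T))$ and push it forward along the unital homomorphism $\alpha$, whereas the paper first straightens $\alpha$ via Lemma~\ref{lem:straightening} and computes with $V^d$ in place of $z^d$; your reduction is cleaner and shows that the straightening step is not needed for this lemma.
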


\begin{proof}
Since $\alpha$ is unital, every fiber representation of $M_{sN}$ has
dimension a multiple of $sN$; hence $\rank(p)=sNR$ for some $R\geq1$.
By Lemma~\ref{lem:straightening}, we may therefore identify
$$
 pM_t(C(\T))p
 \cong M_s\otimes M_N\otimes M_R(C(\T))
$$
so that $\alpha$ satisfies
$$
\alpha(a\otimes1_{\T})=a\otimes1_R,\qquad a\in M_{sN}(\C),
$$ and
$$
 \alpha(1_{sN}\otimes\zeta)=1_{sN}\otimes V
$$
for a unitary $V\in M_R(C(\T))$.

For every $\ell\in\mathbb Z$,
$$
\alpha(a_\ell\otimes S_{N,d}^{\ell})=\alpha(a_\ell\otimes 1_N)\cdot\alpha(1_s\otimes S_{N,d}^{\ell}).
$$
Let
$$
 T:=\sum_{k=1}^{N-1}e_{k,k+1}\otimes1_R+e_{N,1}\otimes V^d
 =
 \begin{pmatrix}
 0&1_R&0&\cdots&0\\
 0&0&1_R&\ddots&\vdots\\
 \vdots&\ddots&\ddots&\ddots&0\\
 0&\cdots&0&0&1_R\\
 V^d&0&\cdots&0&0
 \end{pmatrix}.
$$
Then $\alpha(1_s\otimes S_{N,d})=1_s\otimes T$, and
$$
 \Theta(P)=\alpha\circ\Psi_{N,d}^{(s)}(P) =\sum_{|\ell|\leq L}a_\ell\otimes T^\ell.
$$

The only entry of $T$ that depends on the outer homomorphism is the
entry $V^d$ across the cyclic boundary between $N$ and $1$. We will
choose $h$ to vanish near this boundary. A diagonal conjugation
moves the same entry to a second boundary, between $m$ and $m+1$,
where $h$ will equal one. The remaining transition regions lie
inside the two matrix blocks defining $E$. This explains the three
exact containments before we verify them by matrix multiplication.

 For
$1\leq \ell\leq L$, the powers of $T$ have the expressions below:
\begin{align*}
 T^\ell
 &=\sum_{j=1}^{N-\ell}e_{j,j+\ell}\otimes1_R
   +\sum_{j=N-\ell+1}^{N}e_{j,j+\ell-N}\otimes V^d,\\
 T^{-\ell}
 &=\sum_{j=\ell+1}^{N}e_{j,j-\ell}\otimes1_R
   +\sum_{j=1}^{\ell}e_{j,j-\ell+N}\otimes (V^d)^*.
\end{align*}

Let $m=\lfloor N/2\rfloor$ and put
$$
 g_N:=
 \diag(\underbrace{(V^d)^*,\ldots,(V^d)^*}_{m\ \mathrm{entries}},
       \underbrace{1_R,\ldots,1_R}_{N-m\ \mathrm{entries}})
 =
 \begin{pmatrix}
  1_m\otimes (V^d)^*&0\\
  0&1_{N-m}\otimes1_R
 \end{pmatrix}.
$$
Then $g_N$ is a unitary in $M_N\otimes M_R(C(\T))$.  Let
$g=1_s\otimes g_N$.

Now define finite-dimensional algebras
\begin{align*}
 C&:=M_s\otimes M_N\otimes\C1_R,\\
 D&:=gCg^*=M_s\otimes \left(g_N(M_N\otimes\C1_R)g_N^*\right),\\
 E&:=M_s\otimes
 \left(\begin{array}{cc}
   M_m & 0 \\
   0 & M_{N-m}
 \end{array}\right)
 \otimes\C1_R.
\end{align*}
Since $g$ commutes with $E$, we have $E\subseteq C\cap D$.
Although the entries of $g$ can vary with the point of $\T$, the
algebra $D=gCg^*$ is finite-dimensional: conjugation is an
isomorphism from the fixed matrix algebra $C$ onto $D$.
Moreover, every $e\in E$ satisfies $geg^*=e$, so the common
inclusion is exact and has no approximation error.

We now define $h$.  Since $m=\lfloor N/2\rfloor$ and $N>8L+4$, we have
$$
m-2L+1>\frac{N}{4},
 \quad
N-m-2L+1>\frac{N}{4}.
$$
For $1\leq j\leq N$, define the scalar $\lambda_j$ by
$$
 \lambda_j=
 \begin{cases}
   0, & \text{if } 1\leq j\leq L,\\[1mm]
   \dfrac{j-L}{m-2L+1}, & \text{if } L+1\leq j\leq m-L,\\[1mm]
    1, & \text{if } m-L+1\leq j\leq m+L,\\[1mm]
   \dfrac{N-L-j+1}{N-m-2L+1}, & \text{if } m+L+1\leq j\leq N-L,\\[1mm]
   0, & \text{if }  N-L+1\leq j\leq N.
 \end{cases}
$$

Then
$$
 |\lambda_{j+1}-\lambda_j|<\frac4N
 \quad (1\leq j\leq N-1).
$$

Put
$$
 H:=\diag(\lambda_1,\ldots,\lambda_N)\otimes1_R
$$
and define
$$
 h=1_s\otimes H.
$$
Then $h$ is a positive contraction in $E$.

Next we check that
$$
 h\Theta(P)=\sum_{|\ell|\leq L}a_\ell\otimes HT^\ell\in C.
$$
For $1\leq \ell\leq L$, we have
\begin{align*}
  HT^\ell &=\left( \sum_{j=1}^{N}\lambda_j e_{j,j}\otimes1_R\right) \left(\sum_{j=1}^{N-\ell}e_{j,j+\ell}\otimes1_R
   +\sum_{j=N-\ell+1}^{N}e_{j,j+\ell-N}\otimes V^d\right) \\[3pt]
   & = \sum_{j=1}^{N-\ell}\lambda_j e_{j,j+\ell}\otimes1_R
   +\sum_{j=N-\ell+1}^{N}\lambda_j e_{j,j+\ell-N}\otimes V^d\\
   & = \sum_{j=1}^{N-\ell}\lambda_j e_{j,j+\ell}\otimes1_R
\end{align*}
and
\begin{align*}
  HT^{-\ell} &=\left( \sum_{j=1}^{N}\lambda_j e_{j,j}\otimes1_R\right) \left(\sum_{j=\ell+1}^{N}e_{j,j-\ell}\otimes1_R
   +\sum_{j=1}^{\ell}e_{j,j-\ell+N}\otimes (V^d)^*\right) \\[3pt]
   & = \sum_{j=\ell+1}^{N}\lambda_j e_{j,j-\ell}\otimes1_R
   +\sum_{j=1}^{\ell}\lambda_j e_{j,j-\ell+N}\otimes (V^d)^*\\
   & = \sum_{j=\ell+1}^{N}\lambda_j e_{j,j-\ell}\otimes1_R\,.
\end{align*}
Together with $HT^0=H$, we get that $HT^\ell\in M_N\otimes\C1_R$ for every $|\ell|\leq L$.
Therefore,
$$
 h\Theta(P)=\sum_{|\ell|\leq L}a_\ell\otimes HT^\ell\in C.
$$

We now estimate the commutator.  For $1\leq \ell\leq L$, a similar
calculation gives
\begin{align*}
  T^\ell H &= \left(\sum_{j=1}^{N-\ell}e_{j,j+\ell}\otimes1_R
   +\sum_{j=N-\ell+1}^{N}e_{j,j+\ell-N}\otimes V^d\right) \left( \sum_{j=1}^{N}\lambda_j e_{j,j}\otimes1_R\right) \\[3pt]
   & = \sum_{j=1}^{N-\ell}\lambda_{j+\ell} e_{j,j+\ell}\otimes1_R
\end{align*}
and
\begin{align*}
  T^{-\ell}H &= \left(\sum_{j=\ell+1}^{N}e_{j,j-\ell}\otimes1_R
   +\sum_{j=1}^{\ell}e_{j,j-\ell+N}\otimes (V^d)^*\right) \left( \sum_{j=1}^{N}\lambda_j e_{j,j}\otimes1_R\right)\\[3pt]
   & = \sum_{j=\ell+1}^{N}\lambda_{j-\ell} e_{j,j-\ell}\otimes1_R\,.
\end{align*}
Here the wrap-around terms vanish because their right-hand column indices
belong to $\{1,\ldots,\ell\}$ in the first formula and to
$\{N-\ell+1,\ldots,N\}$ in the second, and $\lambda_j=0$ on both sets.
It follows that
$$
[H,T^\ell]=\sum_{j=1}^{N-\ell}(\lambda_j-\lambda_{j+\ell}) e_{j,j+\ell}\otimes1_R.
$$

Moreover,
$$
 \|[H,T^\ell]\|^2
 =\|[H,T^\ell]^*[H,T^\ell]\|
 =\max_{1\leq j\leq N-\ell}|\lambda_j-\lambda_{j+\ell}|^2.
$$
The estimate $|\lambda_{j+1}-\lambda_j|<4/N$ and telescoping give
$|\lambda_j-\lambda_{j+\ell}|<4\ell/N$.  Therefore,
$$
 \|[H,T^\ell]\|=\max_{1\leq j\leq N-\ell}
 |\lambda_j-\lambda_{j+\ell}| \leq\frac{4|\ell|}{N}.
$$
Similarly, we have
$$
 \|[H,T^{-\ell}]\|=\max_{\ell+1\leq j\leq N}
 |\lambda_j-\lambda_{j-\ell}| \leq\frac{4|\ell|}{N}.
$$

Together with $[H,T^0]=0$, this gives
$$
 \|[h,\Theta(P)]\|
 =\|\sum_{|\ell|\leq L}a_\ell\otimes[H,T^\ell]\|\leq\frac{4}{N}\sum_{|\ell|\leq L}|\ell|\,\|a_\ell\|.
$$

It remains to prove the containments in $D$ and $E$.  Direct
multiplication gives
$$
 \widetilde T:=g_N^*Tg_N
 =\sum_{\substack{1\leq k\leq N-1\\k\neq m}}
   e_{k,k+1}\otimes1_R
  +e_{m,m+1}\otimes V^d+e_{N,1}\otimes1_R.
$$
Equivalently,
$$
 \widetilde T=
 \begin{pmatrix}
 0&1_R&&&&&&\\
 &\ddots&\ddots&&&&&\\
 &&0&1_R&&&&\\
 &&&0&V^d&&&\\
 &&&&0&1_R&&\\
 &&&&&0&\ddots&\\
 &&&&&&\ddots&1_R\\
 1_R&&&&&&&0
 \end{pmatrix},
$$
where $V^d$ is the $(m,m+1)$ entry.
Let $[r]_N$ denote the unique element of $\{1,\ldots,N\}$ congruent to
$r$ modulo $N$.  For $1\leq\ell\leq L$, we have
\begin{align*}
 \widetilde T^\ell
 &=\sum_{\substack{1\leq j\leq N\\j\notin\{m-\ell+1,\ldots,m\}}}
       e_{j,[j+\ell]_N}\otimes1_R
   +\sum_{j=m-\ell+1}^{m}e_{j,j+\ell}\otimes V^d,\\
 \widetilde T^{-\ell}
 &=\sum_{\substack{1\leq j\leq N\\j\notin\{m+1,\ldots,m+\ell\}}}
       e_{j,[j-\ell]_N}\otimes1_R
   +\sum_{j=m+1}^{m+\ell}e_{j,j-\ell}\otimes(V^d)^*.
\end{align*}
The rows in the second sums lie in $\{m-L+1,\ldots,m+L\}$, where
$1-\lambda_j=0$.
Equivalently,
\begingroup
\setcounter{MaxMatrixCols}{12}
\setlength{\arraycolsep}{3pt}

\[
\widetilde T^\ell=
\begin{pmatrix}
0 & \cdots & 0 & 1_R
  & & & & & & & & \\
  & \ddots & & & \ddots
  & & & & & & & \\
  & & 0 & & & 1_R
  & & & & & & \\
  & & & 0 & & & V^d
  & & & & & \\
  & & & & \ddots & & & \ddots
  & & & & \\
  & & & & & 0 & & & V^d
  & & & \\
  & & & & & & 0 & & & 1_R
  & & \\
  & & & & & & & \ddots & & & \ddots
  & \\
  & & & & & & & & 0 & & & 1_R \\
1_R & & & & & & & & & 0 & & \\
  & \ddots & & & & & & & & & \ddots & \\
  & & 1_R & & & & & & & & & 0
\end{pmatrix},
\]

\[
\widetilde T^{-\ell}=
\begin{pmatrix}
0 & & & & & & & & & 1_R & & \\
  & \ddots & & & & & & & & & \ddots & \\
  & & 0 & & & & & & & & & 1_R \\
1_R & & & 0 & & & & & & & & \\
  & \ddots & & & \ddots & & & & & & & \\
  & & 1_R & & & 0 & & & & & & \\
  & & & (V^d)^* & & & 0 & & & & & \\
  & & & & \ddots & & & \ddots & & & & \\
  & & & & & (V^d)^* & & & 0 & & & \\
  & & & & & & 1_R & & & 0 & & \\
  & & & & & & & \ddots & & & \ddots & \\
  & & & & & & & & 1_R & 0 & \cdots & 0
\end{pmatrix}.
\]
\endgroup
In the first matrix, the nonzero upper band is the
$\ell$th superdiagonal. Its entries are $V^d$ in positions
$(j,j+\ell)$ for $m-\ell+1\leq j\leq m$, and $1_R$
elsewhere. The wrap-around entries are $1_R$ in positions
$(N-\ell+j,j)$ for $1\leq j\leq\ell$.
The second matrix is the adjoint of the first. Hence
$$
 (1_{NR}-H)\widetilde T^\ell\in M_N\otimes\C1_R
 \qquad(0<|\ell|\leq L).
$$
The same inclusion holds for $\ell=0$.  Since $g_NH=Hg_N$,
\begin{align*}
 g^*(1_{sNR}-h)\Theta(P)g
 &=(1_s\otimes g_N^*)(1_s\otimes(1_{NR}-H))
       \Theta(P)(1_s\otimes g_N)\\
 &=\sum_{|\ell|\leq L}a_\ell\otimes
       (1_{NR}-H)\widetilde T^\ell\in C.
\end{align*}
Thus $(1_{sNR}-h)\Theta(P)\in gCg^*=D$.

Finally, the diagonal entries of $H(1_{NR}-H)$ vanish outside
$$
 J:=\{L+1,\ldots,m-L\}\cup\{m+L+1,\ldots,N-L\}.
$$
If $j$ belongs to the first interval and $|\ell|\leq L$, then
$1\leq j+\ell\leq m$.  If $j$ belongs to the second interval, then
$m+1\leq j+\ell\leq N$.  The formulas for $T^\ell$ give
$$
 \begin{aligned}
 H(1_{NR}-H)T^\ell
 &=\sum_{j\in J}\lambda_j(1-\lambda_j)e_{j,j+\ell}\otimes1_R\\
 &\in(M_m\oplus M_{N-m})\otimes\C1_R
 \qquad(|\ell|\leq L).
 \end{aligned}
$$
Hence
$$
 h(1_{sNR}-h)\Theta(P)
 =\sum_{|\ell|\leq L}a_\ell\otimes H(1_{NR}-H)T^\ell\in E.
$$
\end{proof}

\begin{proposition}\label{prop:local-model}
Let $B=\bigoplus_{i=1}^{m}B_i$ be a circle algebra, let $X$ be a finite
subset of its unit ball, and let $\varepsilon,\rho>0$. There is
$\delta>0$ such that every unital homomorphism
$\phi:B\to C=\bigoplus_{j=1}^{n}C_j$ into a circle algebra with
$\SPV(\phi)<\delta$ admits a model map $\psi$ satisfying
$$
 KK(\psi)=KK(\phi),\qquad \SPV(\psi)<\rho.
$$
Moreover, there are finite-dimensional unital subalgebras
$C_0,D_0,E_0\subseteq C$ and a positive contraction $h\in E_0$, with
$E_0\subseteq C_0\cap D_0$, such that, for every $x\in X$,
$$
 \|[h,\psi(x)]\|<\varepsilon,\quad
 h\psi(x)\in_\varepsilon C_0,\quad
 (1-h)\psi(x)\in_\varepsilon D_0,\quad
 h(1-h)\psi(x)\in_\varepsilon E_0.
$$
\end{proposition}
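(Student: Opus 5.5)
The plan is to reduce to Laurent polynomials, then use Lemma~\ref{lem:spectral-cover} to force large multiplicity in every nonzero-degree circle-to-circle partial map, and finally assemble blockwise the decompositions from Lemma~\ref{lem:two-cut}.

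\textbf{Step 1: reduction to Laurent polynomials.} Since matrix-valued Laurent polynomials are dense, replace $X$ by a finite set $X'$ of elements $x=\bigoplus_i x_i$ with $\|x-x'\|<\varepsilon/4$. In $X'$, each circle component is $x_i=\sum_{|\ell|\leq L}a^{(i)}_\ell\otimes\zeta^\ell$, and the constants $L$ and $K:=\max_{x,i}\sum_\ell|\ell|\,\|a^{(i)}_\ell\|$ are uniform. Homomorphisms are contractive, so it suffices to achieve all four conditions with tolerance $\varepsilon/4$, and in fact exactly for the containments, on $X'$.

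\textbf{Step 2: choice of $\delta$.} Choose $N_0$ with $N_0>8L+4$, $4K/N_0<\varepsilon/4$ and $2\sin(\pi/2N_0)<\rho$. Then choose $\delta\in(0,1)$ with $\pi/(2\arcsin(\delta/2))\geq N_0$. Let $\phi$ have $\SPV(\phi)<\delta$ and let $\psi$ be its model map. Then $KK(\psi)=KK(\phi)$ holds as recorded after the construction. For a circle-to-circle pair with $q_{ij}\neq0$ and $d_{ij}\neq0$, the map $\alpha_{i,j}\circ\phi^{i,j}$ is unital into $M_{s_iN_{ij}}(C(\T))$, has the same spectral variation since $\alpha_{i,j}$ preserves fiber spectra, and has nonzero degree. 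Lemma~\ref{lem:spectral-cover} therefore gives $N_{ij}\geq N_0$, and hence $\SPV(\psi)\leq2\sin(\pi/2N_0)<\rho$.

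\textbf{Step 3: building the local data.} Build $C_0,D_0,E_0,h$ blockwise over $j$ and, inside $C_j$, over the orthogonal corners $q_{ij}$, taking direct sums so that everything is unital in $C$.
\begin{enumerate}
\item If $C_j$ is finite-dimensional, take $C_0=D_0=E_0=C_j$ on that summand and $h=0$ there.
\item If $C_j$ is a circle block, each $q_{ij}\neq0$ falls into one of three cases.
\begin{enumerate}
\item If $B_i$ is finite-dimensional, or $d_{ij}=0$, then $\psi^{i,j}(B_i)$ is a finite-dimensional algebra $F_{ij}\subseteq q_{ij}C_jq_{ij}$. Take the unitization $F_{ij}+\C q_{ij}$ as all three algebras on that corner, with $h=0$ there.
\item If $d_{ij}\neq0$, write $\psi^{i,j}=\alpha_{i,j}^{-1}\circ\Psi^{(s_i)}_{N_{ij},d_{ij}}$ and apply Lemma~\ref{lem:two-cut} with the outer map $\alpha=\alpha_{i,j}^{-1}$. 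Since $N_{ij}>8L+4$, this yields $C_{ij},D_{ij},E_{ij},h_{ij}$ with exact containments and commutator at most $4K/N_{ij}<\varepsilon/4$.
\end{enumerate}
\end{enumerate}
Set $C_0=\bigoplus C_{ij}$ (and similarly $D_0$, $E_0$, with $h=\sum h_{ij}$) across all corners and summands. Because $\psi(x)$ is block diagonal with respect to the projections $q_{ij}$ and every piece lives in its own corner, the estimates and the inclusion $E_0\subseteq C_0\cap D_0$ pass to the direct sum, with norms taken as maxima.

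\textbf{Main obstacle: corners where $h=0$.} Here $h\psi(x)=0$, and $(1-h)\psi(x)=\psi(x)$ lies in the finite-dimensional corner algebra, so all conditions hold trivially. The point to check carefully is unitality: the corner algebras must contain $q_{ij}$, so that their direct sum over $i$ contains $1_{C_j}$. This is why I take unitizations in case (a), and it is consistent with Lemma~\ref{lem:two-cut}, which already gives unital subalgebras of $p\,C_j\,p$ with $p=q_{ij}$.

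The real content is Step 2: the multiplicity bound must be uniform in degree and apply to every nonzero-degree partial map at once. Lemma~\ref{lem:spectral-cover} delivers exactly this, because the threshold $N_0$ depends only on $X$ and $\varepsilon$ through $L$ and $K$, and never on $\phi$.
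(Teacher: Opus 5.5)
Your proposal is correct and follows essentially the same route as the paper: approximate by Laurent polynomials with uniform $L$ and $K$, choose $\delta$ so that Lemma~\ref{lem:spectral-cover} forces $N_{ij}>8L+4$ (and small $\SPV$, small commutator bound) on every nonzero-degree circle-to-circle corner, apply Lemma~\ref{lem:two-cut} there with outer map $\alpha_{i,j}^{-1}$, use trivial data with $h=0$ on the corners with finite-dimensional range, and sum orthogonally. The differences are only cosmetic. On $h=0$ corners you take all three algebras equal to $\psi^{i,j}(B_i)$, where the paper uses $\C q_{ij}$ for $C_{ij}$ and $E_{ij}$. Your unitization step is redundant, since $\psi^{i,j}(1_{B_i})=q_{ij}$ already.
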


\begin{proof}
If $X$ is empty, there is nothing to approximate; we may enlarge it
to $\{0\}$. For every $x=\bigoplus_{i=1}^m x_i\in X$ and every
circle block $B_i$, choose a Laurent polynomial $P_{x,i}$ such that
$$
 P_{x,i}=\sum_{|\ell|\leq L}a_{x,i,\ell}\otimes\zeta^\ell,
 \qquad
 \|x_i-P_{x,i}\|<\frac{\varepsilon}{4}.
$$
Choose a common integer $L\geq1$ for this finite collection, adding
zero coefficients when necessary. For a finite-dimensional block,
put $P_{x,i}=x_i$. Write $P_x=\bigoplus_{i=1}^m P_{x,i}$ and set
$$
 K=\max_{\substack{x\in X\\B_i\text{ a circle block}}}
       \sum_{|\ell|\leq L}|\ell|\,\|a_{x,i,\ell}\|,
$$
with $K=0$ if there are no circle blocks. Thus the finite-dimensional
source summands require no polynomial approximation.

Choose an integer
$$
 M>\max\left\{8L+4,\frac{8K}{\varepsilon},\frac{\pi}{\rho}\right\}
$$
and
$$
 0<\delta<2\sin\frac{\pi}{2M}<1.
$$
Let $\psi$ be a model map for $\phi$, and set
$q_{ij}=\phi^{i,j}(1_{B_i})=\psi^{i,j}(1_{B_i})$.

Define two index sets
\begin{align*}
 J_0&=\{(i,j):1\leq i\leq m,\ 1\leq j\leq n,\
                \psi^{i,j}\text{ has finite-dimensional range}\},\\
 J_1&=\{(i,j):1\leq i\leq m,\ 1\leq j\leq n,\
                \psi^{i,j}\text{ has nonzero degree}\}.
\end{align*}
By the model construction, these sets partition the pairs $(i,j)$.

For $(i,j)\in J_0$, the spectral variation of $\psi^{i,j}$ is zero.
Define
$$
 h_{ij}=0,\qquad C_{ij}=E_{ij}=\C q_{ij},
 \qquad D_{ij}=\psi^{i,j}(B_i).
$$
The three containments are exact and the commutator is zero.

For $(i,j)\in J_1$, let $N_{ij}$ be the multiplicity of
$\phi^{i,j}$. Apply Lemma~\ref{lem:spectral-cover} under the
identification $\alpha_{i,j}$ to obtain
$$
 N_{ij}\geq\frac{\pi}{2\arcsin(\delta/2)}>M.
$$
Hence
$$
 \SPV(\psi^{i,j})\leq2\sin\frac{\pi}{2N_{ij}}
 \leq\frac{\pi}{N_{ij}}<\rho.
$$
Since $N_{ij}>8L+4$, Lemma~\ref{lem:two-cut}, applied with the outer
map $\alpha_{i,j}^{-1}$, gives $C_{ij},D_{ij},E_{ij}\subseteq q_{ij}C_jq_{ij}$
and $h_{ij}\in E_{ij}$, with $E_{ij}\subseteq C_{ij}\cap D_{ij}$, such that
\begin{align*}
 h_{ij}\psi^{i,j}(P_{x,i})&\in C_{ij},\\
 (q_{ij}-h_{ij})\psi^{i,j}(P_{x,i})&\in D_{ij},\\
 h_{ij}(q_{ij}-h_{ij})\psi^{i,j}(P_{x,i})&\in E_{ij},
\end{align*}
and the commutator norm is at most $4K/N_{ij}<\varepsilon/2$.

Take the orthogonal sums of these algebras and contractions:
\begin{align*}
 h&=\bigoplus_{j=1}^{n}\left(\sum_{i=1}^{m}h_{ij}\right),
 &C_0&=\bigoplus_{j=1}^{n}\left(\sum_{i=1}^{m}C_{ij}\right),\\
 D_0&=\bigoplus_{j=1}^{n}\left(\sum_{i=1}^{m}D_{ij}\right),
 &E_0&=\bigoplus_{j=1}^{n}\left(\sum_{i=1}^{m}E_{ij}\right).
\end{align*}
The supports sum to $1_C$, so $C_0,D_0,E_0$ are unital,
$h\in E_0$, and $E_0\subseteq C_0\cap D_0$.
Also $\SPV(\psi)<\rho$. For every $x\in X$,
$$
 \|[h,\psi(x)]\|
 \leq\|[h,\psi(P_x)]\|+2\|x-P_x\|
 <\frac{\varepsilon}{2}+2\cdot\frac{\varepsilon}{4}
 =\varepsilon.
$$
Each of the three containment tolerances is at most
$\|x-P_x\|<\varepsilon/4$. This concludes the proof.
\end{proof}

\section{Main results}

We replace the connecting maps by model maps with increasingly accurate
three-corner estimates. The resulting system has real rank zero and is
$KK$-shape equivalent to the original system.
We recall the definition and the isomorphism theorem from
\cite[Section~7]{DG} and \cite[Definition~6.10 and Theorem~8.2]{ALZ}.

\begin{definition}\label{def:KK-shape}
Let $A=\varinjlim(A_n,\phi_{n,m})$ and
$B=\varinjlim(B_n,\psi_{n,m})$ be unital inductive limits with unital
connecting maps. The two systems are \emph{$KK$-shape equivalent} if
there are strictly increasing sequences of integers $(k_i)$ and $(l_i)$
and unital homomorphisms
$$
 \xi_i:A_{k_i}\longrightarrow B_{l_i},
 \qquad \eta_i:B_{l_i}\longrightarrow A_{k_{i+1}}
$$
such that
$$
 KK(\eta_i\circ\xi_i)=KK(\phi_{k_i,k_{i+1}})
 \quad\text{in }KK(A_{k_i},A_{k_{i+1}})
$$
and
$$
 KK(\xi_{i+1}\circ\eta_i)=KK(\psi_{l_i,l_{i+1}})
 \quad\text{in }KK(B_{l_i},B_{l_{i+1}}).
$$
\end{definition}

\begin{proposition}[{\cite[Theorem~7.3(b)]{DG},
\cite[Theorem~8.2]{ALZ}}]\label{prop:KK-shape}
Let $A$ and $B$ be unital AT algebras of real rank zero, with presentations
$$
 A=\varinjlim(A_n,\phi_{n,m}),\qquad
 B=\varinjlim(B_n,\psi_{n,m}).
$$
If their defining systems are $KK$-shape equivalent, then $A\cong B$.
\end{proposition}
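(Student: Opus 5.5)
The plan is to reduce the statement to Elliott's classification theorem (Theorem~\ref{thm:AT-classification}). It suffices to show that a $KK$-shape equivalence induces an isomorphism
$$
 \bigl(K_*(A),K_*(A)^+,([1_A],0)\bigr)\cong\bigl(K_*(B),K_*(B)^+,([1_B],0)\bigr)
$$
of scaled ordered $\Z/2$-graded groups. Both algebras are separable unital AT algebras of real rank zero, so Theorem~\ref{thm:AT-classification} then gives $A\cong B$.

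\textbf{Step 1: from $KK$ to a commuting $K$-theory diagram.} Apply the functor $K_*=K_0\oplus K_1$, which factors through $KK$, to the relations in Definition~\ref{def:KK-shape}. This gives
$$
 K_*(\eta_i)\circ K_*(\xi_i)=K_*(\phi_{k_i,k_{i+1}}),
 \qquad
 K_*(\xi_{i+1})\circ K_*(\eta_i)=K_*(\psi_{l_i,l_{i+1}}).
$$
So the two inductive systems of graded groups $(K_*(A_{k_i}))$ and $(K_*(B_{l_i}))$ are exactly intertwined.

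The maps $K_*(\xi_i)$ and $K_*(\eta_i)$ are induced by unital $*$-homomorphisms. Hence each preserves the grading and the order unit $([1],0)$. Each also preserves the cones $K_*(\cdot)^+$ of Definition~\ref{def:graded-invariant}: a pair $(p,u)$ with $u\in U(pM_r(\cdot)p)$ is carried to the corresponding pair for the image projection, and the unitization term $1_r-p$ is carried to $1_r-p$.

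\textbf{Step 2: passing to the limits.} By continuity of $K_*$ for inductive limits, the intertwining yields mutually inverse group homomorphisms $\Xi:K_*(A)\to K_*(B)$ and $H:K_*(B)\to K_*(A)$. They are defined on the images of the finite stages by $\Xi([x]_A)=[K_*(\xi_i)x]_B$, and similarly for $H$. Both preserve the grading and the order unit. It remains to show that $\Xi$ maps $K_*(A)^+$ into $K_*(B)^+$, and likewise for $H$; together these give an order isomorphism.

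For this I need the \emph{continuity of the positive cone}: every element of $K_*(A)^+$ is the image of an element of $K_*(A_n)^+$ for some $n$. I will prove this by semiprojectivity-type approximation. Let $p\in M_r(A)$ be a projection and $u$ a unitary in $pM_r(A)p$. Approximate $p$ by a self-adjoint element of some $M_r(A_n)$ and use functional calculus to obtain a projection $p_n\in M_r(A_n)$ whose image is unitarily equivalent to $p$ by a unitary close to $1$. Conjugate $u$ accordingly. Then approximate the result by an element of $p_nM_r(A_n)p_n$ and take its polar part to obtain a unitary $u_n$. Close unitaries are homotopic, so the pair $(p_n,u_n)$ maps to a pair representing the same class as $(p,u)$.

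\textbf{Main obstacle.} This cone-continuity step is where the care lies. One must check that the class $[u+(1_r-p)]$ is preserved under these small perturbations. It also has to be checked that the cone on the limit is exactly the union of the images of the finite-stage cones, rather than merely contained in its closure. Both points follow from standard homotopy-invariance arguments once the approximations are taken within distance less than $1$.

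Granting the cone continuity, $\Xi$ and $H$ are inverse isomorphisms of scaled ordered graded groups. Elliott's Theorem~\ref{thm:AT-classification} then yields $A\cong B$.
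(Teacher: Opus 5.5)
Your proposal is correct and follows essentially the same route as the paper. Both apply $K_*$ to the $KK$ identities to get an exact intertwining of the $K_*$-systems, note that maps induced by unital homomorphisms preserve the graded cone and the order unit, pass to the limits, and invoke Elliott's classification (Theorem~\ref{thm:AT-classification}). The only difference is that the paper simply cites continuity of the scaled ordered $K_*$-group, whereas you sketch a perturbation proof of it; with non-injective connecting maps, you should push your approximants to a later finite stage before applying functional calculus and taking polar parts.
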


\begin{proof}
In the present AT setting, this follows directly from
Theorem~\ref{thm:AT-classification}. Put
$F_i=K_*(\xi_i)$ and $G_i=K_*(\eta_i)$, using the maps from
Definition~\ref{def:KK-shape}. The two $KK$ identities imply
$$
 G_iF_i=K_*(\phi_{k_i,k_{i+1}}),\qquad
 F_{i+1}G_i=K_*(\psi_{l_i,l_{i+1}}).
$$
In particular,
$$
 F_{i+1}K_*(\phi_{k_i,k_{i+1}})
 =K_*(\psi_{l_i,l_{i+1}})F_i,
$$
and the analogous compatibility holds for the $G_i$.
They therefore induce mutually inverse graded group maps on the
inductive limits. Each $F_i$ and $G_i$ comes from a unital
homomorphism, so it preserves the graded positive cone and the order
unit. Continuity of the scaled ordered $K_*$-group shows that the
limit maps have the same properties. They are thus inverse
isomorphisms of the invariants in
Definition~\ref{def:graded-invariant}. Both algebras have real rank
zero, so the AT classification theorem gives $A\cong B$.
\end{proof}

The following is the finite-direct-sum version of
\cite[Proposition~1.4.10]{EG}.
\begin{lemma}
\label{lem:spectral-composition}
Let $A$ and $B$ be circle algebras, and let
$\phi:A\to B$ be a unital homomorphism.
For every $\varepsilon>0$, there is $0<\rho<\varepsilon$ such that,
for every circle algebra $C$ and every unital homomorphism
$\eta:B\to C$,
$$
 \SPV(\eta)<\rho
 \quad\Longrightarrow\quad
 \SPV(\eta\circ\phi)<\varepsilon.
$$
The number $\rho$ depends only on $\varepsilon$ and the fixed map
$\phi$. In particular, it is independent of $C$,
of $\eta$, and of the multiplicities of the partial maps of $\eta$.
\end{lemma}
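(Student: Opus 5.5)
The plan is to reduce to a single pair of blocks and then prove a uniform continuity statement for fibre spectra under composition.

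\emph{Reduction.} Since $\SPV$ of a map between circle algebras is the maximum over nonzero circle-to-circle partial maps, I first examine the partial maps of $\eta\circ\phi$. The $(i,k)$-partial map is
$$(\eta\circ\phi)^{i,k}=\sum_j \eta^{j,k}\circ\phi^{i,j},$$
an orthogonal sum indexed by the intermediate blocks $B_j$. Every summand has range in $\eta^{j,k}(1_{B_j})C_k\eta^{j,k}(1_{B_j})$, and these corners are mutually orthogonal. So at each $z$ the fibre spectrum of the composite, taken in the $e_{11}$-corner of $A_i$, is the multiset union over $j$ of the fibre spectra of the $\eta^{j,k}\circ\phi^{i,j}$. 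A matching for the union is obtained by concatenating matchings for the pieces, so
$$\SPV\bigl((\eta\circ\phi)^{i,k}\bigr)\le \max_j \SPV\bigl(\eta^{j,k}\circ\phi^{i,j}\bigr).$$
This leaves the case of one source block $A_i$, one intermediate block $B_j$ and one target block $C_k$. If one of them is finite-dimensional, there are two subcases. When $A_i$ or $C_k$ is finite-dimensional, the partial map is assigned variation zero. When $B_j$ is finite-dimensional, $\eta^{j,k}\circ\phi^{i,j}$ factors through a matrix algebra, so its fibre spectra are constant in $z$ and the variation is $0$. The same constancy holds whenever $\phi^{i,j}$ has finite-dimensional range.

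\emph{Main case.} Take $A_i=M_s(C(\T))$, $B_j=M_r(C(\T))$ and $\phi^{i,j}$ of multiplicity $N$. Fix $w\in\T$. For $\eta^{j,k}$, the spectrum at $w$ is a multiset $\{y_1,\dots,y_R\}$. By diagonalizing $\eta^{j,k}$ at $w$, the fibre of $\eta^{j,k}\circ\phi^{i,j}$ at $w$ is unitarily equivalent to $\bigoplus_{t=1}^R \mathrm{ev}_{y_t}\circ\phi^{i,j}$. Hence
$$\SP(\eta\circ\phi)_w=\bigsqcup_{t=1}^R \SP\phi^{i,j}_{y_t},$$
a multiset of $RN$ points. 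The key input is uniform continuity. The map $y\mapsto \SP\phi^{i,j}_y$ is continuous for $d_N$ on the compact space $\T$, so there is $\rho_0>0$ with
$$|y-y'|<\rho_0\ \Longrightarrow\ d_N\bigl(\SP\phi^{i,j}_y,\SP\phi^{i,j}_{y'}\bigr)<\varepsilon/2.$$
Choose $\rho<\min(\rho_0,\varepsilon)$, taking the minimum over the finitely many pairs $(i,j)$. This choice depends only on $\phi$.

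\emph{Conclusion of the main case.} Suppose $\SPV(\eta)<\rho$ and $w,w'\in\T$. Then there is a permutation $\sigma$ with $|y_t(w)-y_{\sigma(t)}(w')|<\rho$ for every $t$. Concatenating, over $t$, the $d_N$-matchings between $\SP\phi^{i,j}_{y_t(w)}$ and $\SP\phi^{i,j}_{y_{\sigma(t)}(w')}$ gives a matching of the two $RN$-point multisets with displacement $<\varepsilon/2<\varepsilon$. Taking the maximum over $(i,k)$ then finishes the proof.

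\emph{Main obstacle.} The step needing care is the claim that the fibre of the composite at $w$ decomposes as the disjoint union of the $\SP\phi^{i,j}_{y_t}$. This must use the $e_{11}\otimes 1$ corner convention consistently. The $e_{11}$-corner of $A_i$ maps into a corner of $B_j$ of rank $N$ per fibre. Under $\eta^{j,k}$, that corner is sent to a direct sum of point evaluations at the $y_t$, each repeated through the $M_r$-structure, so the multiplicity works out to $RN$ and not $rRN$. I would handle this by writing $\phi^{i,j}(e_{11}\otimes f)$ in straightened coordinates (Lemma~\ref{lem:straightening}) and checking the count directly. Nothing in the argument depends on $C$, on $\eta$, or on the multiplicity $R$, which gives the stated independence.
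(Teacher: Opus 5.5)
Your proof is correct and takes essentially the paper's approach. The paper gives no argument of its own: it presents the lemma as the finite-direct-sum version of \cite[Proposition~1.4.10]{EG}. Your decomposition $(\eta\circ\phi)^{i,k}=\sum_j\eta^{j,k}\circ\phi^{i,j}$, combined with uniform continuity of $y\mapsto\SP\phi^{i,j}_y$ in the matching metric, is exactly that extension. The concatenated matchings compare multisets of equal size because each $(\eta^{j,k}\circ\phi^{i,j})(e_{11}\otimes 1_{\T})$ has constant rank on $\T$; this is implicit in your write-up and worth stating.
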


\begin{proof}[Proof of Theorem~\ref{thm:main}]
Write $A=\varinjlim(A_n,\phi_{n,m})$ with unital connecting maps,
and set $n_1=1$. We first construct a subsequence $n_1<n_2<\cdots$
and unital maps
$$
 \psi_{k,k+1}:B_k\longrightarrow B_{k+1},
 \qquad B_k=A_{n_k},
$$
with small spectral variation and
$$
 KK(\psi_{k,k+1})=KK(\phi_{n_k,n_{k+1}}).
$$
Write $\psi_{i,k}$ for the composites and
$\psi_{k,k}=\mathrm{id}_{B_k}$.

Suppose that $B_1,\ldots,B_k$ and the preceding connecting maps have
been chosen. As each $B_i$ is chosen, fix a dense sequence
$\{x_{i,l}:l\geq1\}$ in its unit ball. Set
$$
 X_k=\bigcup_{i=1}^{k}
       \{\psi_{i,k}(x_{i,l}):1\leq l\leq k\}\subseteq B_k.
$$
For each $1\leq i\leq k$, apply
Lemma~\ref{lem:spectral-composition} to the map
$\psi_{i,k}:B_i\to B_k$ with $\varepsilon=2^{-k}$.
This gives a number $0<\rho_{i,k}<2^{-k}$ such that, for every
circle algebra $C$ and every unital homomorphism $\eta:B_k\to C$,
$$
 \SPV(\eta)<\rho_{i,k}
 \quad\Longrightarrow\quad
 \SPV(\eta\circ\psi_{i,k})<2^{-k}.
$$
Set
$$
 \rho_k=\min_{1\leq i\leq k}\rho_{i,k}.
$$
Since the minimum is taken over finitely many positive numbers,
$0<\rho_k<2^{-k}$. Consequently the same tolerance works for
all the maps already chosen:
$$
 \SPV(\eta)<\rho_k
 \quad\Longrightarrow\quad
 \SPV(\eta\circ\psi_{i,k})<2^{-k}
 \quad(1\leq i\leq k).
$$
Here $\rho_k$ depends only on the maps already chosen and the
required bound $2^{-k}$, so it can be fixed before choosing the
next stage and its connecting map.

Apply Proposition~\ref{prop:local-model} to $X_k$ with tolerances
$2^{-k}$ and $\rho_k$, and let $\delta_k>0$ be the resulting constant.
By Theorem~\ref{thm:small-variation}, there is $n_{k+1}>n_k$ such that
$$
 \SPV(\phi_{n_k,n_{k+1}})<\delta_k.
$$
Choose $\psi_{k,k+1}$ to be a model map for $\phi_{n_k,n_{k+1}}$.
By Proposition~\ref{prop:local-model},
$$
 KK(\psi_{k,k+1})=KK(\phi_{n_k,n_{k+1}}),
 \qquad \SPV(\psi_{k,k+1})<\rho_k.
$$
There are finite-dimensional unital subalgebras
$C_k,D_k,E_k\subseteq B_{k+1}$ and a positive contraction $h_k\in E_k$
with $E_k\subseteq C_k\cap D_k$ satisfying the four estimates of
Proposition~\ref{prop:local-model} on $X_k$ with tolerance $2^{-k}$.
This completes the induction. Set
$$
 B=\varinjlim(B_k,\psi_{k,l}).
$$
For each fixed $i$, the choice of $\rho_k$ gives
$$
 \SPV(\psi_{i,k+1})<2^{-k}\qquad(k\geq i).
$$
Hence $B$ has real rank zero by Theorem~\ref{thm:small-variation}.

We next identify the two limits. For each $k$, define unital
homomorphisms
$$
 \alpha_k=\mathrm{id}:A_{n_k}\longrightarrow B_k,
 \qquad
 \beta_k=\phi_{n_k,n_{k+1}}:B_k\longrightarrow A_{n_{k+1}},
$$
using $B_k=A_{n_k}$. Then
\begin{align*}
 KK(\beta_k\circ\alpha_k)&=KK(\phi_{n_k,n_{k+1}}),\\
 KK(\alpha_{k+1}\circ\beta_k)
 &=KK(\phi_{n_k,n_{k+1}})=KK(\psi_{k,k+1}).
\end{align*}
In particular, the following diagram commutes in $KK$:
$$
 \begin{CD}
 A_{n_k} @>{KK(\phi_{n_k,n_{k+1}})}>> A_{n_{k+1}}\\
 @V{KK(\alpha_k)}VV @VV{KK(\alpha_{k+1})}V\\
 B_k @>{KK(\psi_{k,k+1})}>> B_{k+1}.
 \end{CD}
$$
Thus $(A_{n_k},\phi_{n_k,n_l})$ and $(B_k,\psi_{k,l})$ are
$KK$-shape equivalent. Both limits have real rank zero, so
Proposition~\ref{prop:KK-shape} gives $A\cong B$.

It remains to verify the three-corner estimates in $B$.
Let $X$ be a finite subset of the unit ball of $B$, and fix
$\varepsilon>0$. Write $\iota_i:B_i\to B$ for the canonical maps.
Choose a stage $i$ and indices $l_x$ such that
$$
 y_x:=\iota_i(x_{i,l_x}),\qquad
 \|x-y_x\|<\frac{\varepsilon}{4}\quad(x\in X).
$$
Here is the unit-ball approximation in detail, including the case
of noninjective connecting maps. Given a small $\tau>0$, approximate
each $x$ by $\iota_j(a_x)$ within $\tau$ at a common stage $j$.
Then $\|\iota_j(a_x)\|<1+\tau$. Since
$$
 \|\iota_j(a_x)\|=\lim_{r\to\infty}\|\psi_{j,r}(a_x)\|,
$$
there is a common later stage $r$ at which all these representatives
have norm less than $1+2\tau$. Put
$b_x=\psi_{j,r}(a_x)/(1+2\tau)$, so $\|b_x\|\leq1$ and
$\|x-\iota_r(b_x)\|<3\tau$.
Choose $x_{r,l_x}$ within $\tau$ of $b_x$.
Taking $\tau<\varepsilon/16$ gives the required approximation with
$i=r$. Thus, no injectivity assumption is used.

Choose $k\geq\max\{i,l_x:x\in X\}$ with $2^{-k}<\varepsilon/2$.
Then $\psi_{i,k}(x_{i,l_x})\in X_k$. Apply $\iota_{k+1}$ to
$C_k,D_k,E_k,h_k$, and denote their images by $C,D,E,h$.
The algebras remain finite-dimensional and unital, and
$$
 h\in E,\qquad E\subseteq C\cap D.
$$
Contractivity gives the four estimates in Proposition \ref{prop:criterion} for
$y_x$ with tolerance $2^{-k}$. Consequently,
\begin{align*}
 \|[h,x]\|&<2^{-k}+2\|x-y_x\|<\varepsilon,\\
 hx&\in_{\,2^{-k}+\varepsilon/2}C,\\
 (1-h)x&\in_{\,2^{-k}+\varepsilon/2}D,\\
 h(1-h)x&\in_{\,2^{-k}+\varepsilon/2}E.
\end{align*}
By
Proposition~\ref{prop:criterion}, $B$ has complexity rank at most one. Since $A$ is isomorphic to $B$, the same is true for $A$.
\end{proof}

\section*{AI statement}
 During the development of this work, the authors use GPT 6 Astra for the construction in Lemma 4.1. The authors have checked and substantially revised all arguments and takes full responsibility for the final version.

\end{document}